\documentclass[journal,twoside,web]{ieeecolor}
\usepackage{generic}
\usepackage{cite}
\usepackage{amsmath,amssymb,amsfonts}
\usepackage{algorithmic}
\usepackage{graphicx}
\usepackage{algorithm,algorithmic}
\usepackage{hyperref}
\hypersetup{hidelinks=true}
\usepackage{textcomp}
\def\BibTeX{{\rm B\kern-.05em{\sc i\kern-.025em b}\kern-.08em
    T\kern-.1667em\lower.7ex\hbox{E}\kern-.125emX}}
\usepackage{amsmath} 
\usepackage{amssymb,bm}  
\usepackage{cite}
\usepackage{graphicx} 
\usepackage{epstopdf}
\newtheorem{theorem}{Theorem}
\newtheorem{lemma}{Lemma}
\newtheorem{assumption}{Assumption}

\newtheorem{remark}{Remark}

\title{\LARGE \bf
    
    Extremum seeking with exponential convergence via high-order Lie bracket approximations}

\author{Victoria Grushkovskaya$^{1,3}$ and Sameh A. Eisa$^{2}$
\thanks{$^{1}$Victoria Grushkovskaya is with the Department of Mathematics,
        University of Klagenfurt, 9020 Klagenfurt am W\"orthersee, Austria
        {\tt\small viktoriia.grushkovska@aau.at}}%
\thanks{$^{2}$Sameh A. Eisa is with the Department of Aerospace Engineering and Engineering Mechanics, University of Cincinnati, Ohio, United states {\tt\small eisash@ucmail.uc.edu}}
\thanks{$^{3}$Institute of Applied Mathematics \& Mechanics, National Academy of Sciences of Ukraine}
}

\begin{document}
\title{Extremum seeking with exponential convergence via high-order Lie bracket approximations}
\author{Victoria Grushkovskaya, \IEEEmembership{Member, IEEE} and Sameh A. Eisa, \IEEEmembership{Member, IEEE}
\thanks{}
\thanks{Victoria Grushkovskaya is with the Department of Mathematics at the University of Klagenfurt, 9020 Klagenfurt am W\"orthersee, Austria, on leave from the Institute of Applied Mathematics \& Mechanics, NAS of Ukraine (email: viktoriia.grushkovska@aau.at)}
\thanks{Sameh A. Eisa is with the Department of Aerospace Engineering and Engineering Mechanics at the University of Cincinnati, Cincinnati, OH 45221 (email: eisash@ucmail.uc.edu).}
}

\maketitle

\begin{abstract}
This paper focuses on the further development of the Lie bracket approximation approach for \textcolor{black}{optimization and control via} extremum seeking systems. Classical results in this area provide 
algorithms with exponential convergence rates for quadratic-like cost functions, and polynomial decay rates for cost functions of higher degrees. This paper proposes a novel design that ensures the motion of the extremum seeking system along directions associated with higher-order Lie brackets, thereby achieving exponential convergence for cost functions that are \textcolor{black}{``flat-bottomed", i.e.,} polynomial-like but of degree greater than two \textcolor{black}{and unlike literature assumptions, we do not require Hessian information or strictly non zero Hessian at the minimum}. Numerical simulations are presented to demonstrate the effectiveness of the proposed designs and their exponential convergence on fourth-, sixth-, and even eighth-degree cost functions. \textcolor{black}{We include a comparison that shows our design outperforming a Newton-based method}.
\end{abstract}

\begin{IEEEkeywords}
Adaptive control, extremum seeking, Lie brackets, \textcolor{black}{flat-bottomed optimization}, optimization methods
\end{IEEEkeywords}

\section{Introduction }
\textcolor{black}{Consider} the following control-affine system:
\begin{equation}\label{eqn:controlAffineIntro}
    \dot{x}=f_0(x)+ \sum\limits_{i=1}^{m} \frac{1}{\varepsilon^{p_i}} f_i(x) u_i\left(k_i\frac{t}{\varepsilon}\right),\\
\end{equation}
where $x =(x_1,...,x_n)^\top\in \mathbb{R}^n$ is the state  vector, \textcolor{black}{ $x(t_0)=x(0)\in\mathbb R^n$}, $p_i\in (0,1)$, $0<\varepsilon\ll1$, $f_0$ is the drift (uncontrolled) vector field of the system, $f_i$ are the control vector fields, $u_i$ are the control inputs, $m  \in \mathbb N$ is the number of control inputs, and $k_i$ are positive rational numbers. 
The geometric nature of systems of the form \eqref{eqn:controlAffineIntro} allows the application of geometric control methods and techniques \cite{bullo04,agrachev2013control} that are based on Lie-bracket-based approximations and analysis \textcolor{black}{to introduce  optimization and control laws via what is known as model-free extremum seeking systems} \cite{DurrAuto,Sch14,GZE18,Labar19,PE23}. A Lie bracket  between the vector fields $f_i\text{ and }f_j$ is defined as:
\begin{equation}
[f_i,f_j]:=\frac{\partial f_j}{\partial x}f_i-\frac{\partial f_i}{\partial x}f_j.
\end{equation}

Extremum seeking systems \cite{Ar03} are model-free, real-time dynamic optimization and control techniques aimed at stabilizing a dynamical system around the extremum point of an objective function for which only measurements, \textcolor{black}{and} not \textcolor{black}{necessarily} the analytical expression, are available; see \cite{scheinker2024100} for a comprehensive review. Classical ES methods (e.g., \cite{Kr00,yilmaz2025exponential,guay2015time}) rely on traditional averaging techniques \cite{khalil2002nonlinear,Maggia2020higherOrderAvg} for  analysis and design. On the other hand, control-affine ES methods \cite{DurrAuto,Sch14,GZE18,Labar19,PE23} (the focus of this paper) employ Lie-bracket approximations for  analysis and design. For a particular case of  \eqref{eqn:controlAffineIntro} with $p_i=0.5,k_i=1\text{ for all }i$, a first-order Lie bracket system (LBS) approximation was given in \cite{DurrAuto}, which \textcolor{black}{enabled the design of their ES law.}
In \cite{GZE18}, a generalized formulation and first-order LBS approach was presented for a class of control-affine ES systems that: (i) unifies previous works such as \cite{DurrAuto,Sch14}, and (ii) provides a new ES law with enhanced stability guarantees. It is important to emphasize that ES methods based on first-order LBS approximation (e.g., \cite{DurrAuto,Sch14,GZE18}) are gradient-based ES approaches; that is, the ES system is designed such that its LBS/average behaves as a gradient-flow \textcolor{black}{(i.e., gradient-decent-like)}. 

The paper  \cite{Labar19} further generalized  the conditions on $p_i$ and $k_i$ compared to \cite{DurrAuto}, and introduced a second-order LBS, which approximates \eqref{eqn:controlAffineIntro}. This enabled the introduction of a Newton-based ES approach, since  the inclusion of second-order Lie brackets provided access to second-order derivative information (i.e., the Hessian), hence realizing a Newton-based flow. Recently, the authors in \cite{PE23} proposed a generalized approach for generating higher-order LBSs to approximate \eqref{eqn:controlAffineIntro}. They also showed that LBS approximations can be viewed as averaging terms themselves, which guarantees the closeness of trajectories between LBSs and the original system \eqref{eqn:controlAffineIntro} given small enough $\varepsilon$. Moreover, it was observed that ES designs based on third-order LBS possess a faster convergence rate  compared even to Newton-based ES. The authors in \cite{PE23} claimed (without proof) that the observed faster convergence  arises from the inclusion of third-order Lie brackets, which provide access to higher-order derivatives beyond Hessian (i.e., third-order derivative information).

\textbf{Motivation \& contributions.} Inspired by the concept of higher-order Lie bracket averaging \cite{PE23}, this paper further explores the application of these techniques to extremum seeking problems, with a particular focus on achieving faster convergence rates. In particular, many extremum seeking algorithms based on first-order Lie bracket approximations exhibit exponential convergence when the cost function  $J(x)$ behaves locally like a quadratic function near the extremum point $x^*$  (i.e., $J(x)\sim \|x-x^*\|^{2}$). However, if the cost function behaves like a higher-degree polynomial near the extremum, i.e., $ J(x) \sim \|x - x^*\|^m  $ with $ m > 2$ \textcolor{black}{(flat-bottomed)}, such algorithms exhibit only a polynomial decay rate~\cite{GZE18}.  \textcolor{black}{In fact, $m>2$ violates assumptions needed for exponential convergence in classic/standard ES methods in general (see \cite[Assumption 1]{yilmaz2025exponential}) due to zero Hessian at the minimum. Even Newton-based methods struggle with flat-bottomed objective functions due to vanishing and extremely small Hessian (curvature) at the minimum, making its inversion and estimation very challenging numerically; also, Newton-based methods do not posses exponential convergence for high-degree objective functions.} It is observed in \cite{PE23} that the use of higher-order Lie brackets to design the ES law reached a faster convergence rate compared to the Newton-based ES law for $J(x)\sim \|x-x^*\|^{4}$. Nevertheless, no conclusion or proof was provided regarding the nature of the faster convergence resulting from higher-order Lie brackets. 
 
 In this paper, we investigate a class of extremum seeking problems involving the unconstrained minimization of a cost function $J(x)$.
The function $J$ can be unknown in terms of an explicit analytic expression, but it can be evaluated (measured) at any point. We focus on designing an ES system of the form
$
 \dot x = u(t,J(x)),
$
such that the system's trajectories  tend exponentially to an extremum point of the function $J$. To develop our approach, in this paper we assume that the cost function  behaves locally like an $m$-th order near the minimizer $x^*\in\mathbb R^n$, i.e., $J(x)\sim \|x-x^*\|^{m}$, with some $m\ge 2$. 

The main contribution of this paper is a novel extremum seeking design framework that leverages the excitation of higher-order Lie brackets to steer the system along directions corresponding to higher-order derivatives of the cost function. This approach helps to increase the convergence rate, in particular,  ensuring the exponential convergence even for cost functions that are not quadratic in nature. Furthermore, we generalize the result of~\cite{GZE18}, which described a family of vector fields whose first-order Lie bracket equals the gradient of the cost function. In this paper, we extend this idea by deriving a formula that generates vector fields such that the corresponding $\ell$-th order Lie bracket equals the $\ell$-th derivative of the cost function. 
This paper extends the conference version~\cite{grushkovskaya2025extremum} in several aspects. First, we introduce an explicit formula for exciting iterated Lie brackets of the form $[f_1,[f_1,...,[f_1,f_2]]]$ of arbitrary length in two-input control-affine systems without drift. This allows us to  achieve exponential convergence in extremum seeking systems with higher-order polynomial cost functions. Moreover, this result is relevant not only for extremum seeking  tasks, but also for various stabilization and motion planning problems for two-input nonholonomic systems. Arising from the findings of~\cite{Gau14,GZ24_CDC}, our formula strengthens prior results by providing \emph{explicit} ES laws that generate the desired Lie brackets with prescribed coefficients. Second, we discuss the decay rate of the solutions and illustrate it via the simulations. Third, we extend the introduced ES design to multi-variable cost functions and propose sufficient conditions for exponential convergences in such cases. Finally, 
additional simulations included in this extended paper demonstrate the effectiveness of the proposed ES approach for fourth-, sixth-, and eighth-order polynomial cost functions, as well as in multi-variable cases. \textcolor{black}{We also compare the convergence of the proposed ES system vs. a Newton-based ES method to demonstrate the advantage of the new ES approach handeling ``flat-bottomed" objective functions as opposed to Newton-based methods.} 
Besides that, we include a block diagram of the generalized design and expand several proofs to make the paper more accessible and self-contained.

\section{Preliminaries}

\subsection{Notations} 

$\mathbb R^+=[0,\infty)$  -- the set of all non-negative real numbers;\\
 $B_\delta(x^*)$,$\overline{B_\delta(x^*)}$ -- $\delta$-neighborhood of $x^*{\in} \mathbb R^n$ and its closure;\\
$C^\ell(D;\mathbb R)$ -- the set of all functions $h: D \to \mathbb{R}$ that are $\ell$-times continuously differentiable on $D$;\\
 for  $h{\in} C^1(\mathbb R^n;\mathbb R)$, $\xi{\in}\mathbb R^n$, we define 
 $\nabla h(\xi):=\frac{\partial h(x)}{\partial x}^T\Big|_{x=\xi}$ to be a column vector;
\\
for $h\in C^\ell(\mathbb R;\mathbb R)$, $\ell\in\mathbb N$, we define  $h^{(\ell)}(x):=\frac{d^\ell h(x)}{dx^\ell}$;\\
for $f:\mathbb R\to\mathbb R$,  $f(z)=O(z)$ as $z\to 0$ means that there is a $c>0$ such that $|f(z)|\le c|z|$ in some neighborhood of $0$;\\
for  $f,g:\in C^1(\mathbb R^n;\mathbb R^n)$, $x^*\in\mathbb R^n$, the Lie derivative at $x^*$ is defined as
 $L_gf(x)=\left.\frac{\partial f(x)}{\partial x}g(x)\right|_{x=x^*}$, 
 and 
 $[f,g](x^*)=L_fg(x^*)-L_gf(x^*)$ is the first order Lie bracket. 
 \\
 to define higher-order Lie brackets, we introduce $\ell+1$-dimensional multi-index $\mathcal I_\ell=(i_1,... ,i_{\ell+1})$; then\\
 $f_{\mathcal I_\ell}(x)= \left[f_{i_1},[f_{i_2},...,[f_{i_{\ell}},f_{i_{\ell+1}}],...]\right](x)$ --  the left-iterated Lie bracket of length $(\ell+1)$, or $\ell$-th-order Lie bracket;
for $\mathcal I_0{\in}\{1,...,m\}$, $f_{I_0}$  means a corresponding  vector field;\\
for $\ell\in\mathbb N$, $f_1,f_2\in C^\ell$, we will also use the notation
${\rm ad}_{f_1^\ell}f_2=\big[\underbrace{f_1,[f_1,...,[f_1}_{\ell\text{ times}},f_2]...]\big]$;\\

\subsection{Lie brackets approximations} 
Consider the control-affine system \eqref{eqn:controlAffineIntro}
assuming $f_i{:\mathbb R^n\to\mathbb R^n}$ be continuously differentiable (up to any order) vector fields, and $u_i$ be continuous in $t$ and $T$-periodic functions with zero average, i.e., $\int\limits_0^Tu_i(\tau)d\tau=0$. LBS approximations of \eqref{eqn:controlAffineIntro} up to a third-order are \cite{PE23}:
\begin{equation}\label{eqn:LBS_main}
    \dot {\bar x}{=}f_0(\bar x)+ \sum_{i = 1}^r L_i(\bar x),
    \end{equation}
with 
$ L_1 = \sum\limits_{j_1 = 1}^m \sum\limits_{j_2=j_1+1}^m \sigma_{j_1 j_2}[f_{j_1}, f_{j_2}]$,
\begin{align}
        L_2 &= \sum_{j_1 = 1}^m \sum_{j_2=j_1+1}^m \sum_{j_3=1}^m  \sigma_{j_1 j_2 j_3} [f_{j_3},[f_{j_1},f_{j_2}]],\nonumber\\
        \begin{split}\nonumber
        L_3 &= \sum_{j_1 = 1}^m \sum_{j_2=j_1+1}^m \sum_{j_3=1}^m \sum_{j_4=j_3+1}^m \beta_{1_{j_1 j_2 j_3 j_4}} \Big[[f_{j_1},f_{j_2}],[f_{j_3},f_{j_4}]\Big]\\
        &+ \sum_{j_1 = 1}^m \sum_{j_2=j_1+1}^m \sum_{j_3=1}^m \sum_{j_4=1}^m \beta_{2_{j_1 j_2 j_3 j_4}} [[[f_{j_1},f_{j_2}],f_{j_3}],f_{j_4}],
    \end{split}
 \end{align}
where $\sigma_{j_1 j_2}$, $\sigma_{j_1 j_2 j_3}$, $\beta_{1_{j_1 j_2 j_3 j_4}}$ and $\beta_{2_{j_1 j_2 j_3 j_4}}$ are coefficients resulting from the iterated integrals of the dither input signals (formulas are provided in \cite[Section 4]{PE23}). 
Truncating \eqref{eqn:LBS_main} at $r=1$ provides first-order LBS (e.g., \cite{DurrAuto,GZE18}). Similarly, truncating \eqref{eqn:LBS_main} at $r=2$ and $r=3$ provides second- and third-order LBS, respectively.
   The stability properties of systems~\eqref{eqn:controlAffineIntro} and \eqref{eqn:LBS_main} truncated at a finite $r$ are related as follows. 
  \begin{lemma}[\cite{DurrAuto,GZE18,PE23}]~\label{dthm}
\textit{If a compact set $ S\subset\mathbb R^n$ is locally (globally) uniformly asymptotically stable for~\eqref{eqn:LBS_main} then it is locally (semi-globally) practically
uniformly asymptotically stable for~\eqref{eqn:controlAffineIntro}.}
\end{lemma}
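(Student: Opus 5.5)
The argument has two parts. First we show that on compact time intervals the trajectories of \eqref{eqn:controlAffineIntro} stay uniformly close to those of the truncated LBS \eqref{eqn:LBS_main}, with an error that goes to zero as $\varepsilon\to0$. Then we use a Lyapunov function for the LBS to extend this to practical asymptotic stability on the infinite horizon, in the spirit of the standard argument of \cite{DurrAuto}.

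\emph{Step 1: closeness on finite intervals.} Fix a compact $K\subset\mathbb R^n$ and $T_f>0$. Rescale time by $\tau=t/\varepsilon$. Then integrate \eqref{eqn:controlAffineIntro} over one dither period $\varepsilon T$ (with $T$ a common period of the $u_i(k_i\cdot)$). We expand the flow by repeated integration by parts, i.e.\ a Chen--Fliess/Volterra expansion. The terms that are linear in the $u_i$ vanish because the inputs have zero mean. The quadratic terms produce $\sigma_{j_1j_2}[f_{j_1},f_{j_2}]$ times $\varepsilon$. The cubic and quartic iterated integrals produce the coefficients of $L_2$ and $L_3$. The exponents $p_i\in(0,1)$ are chosen as in \cite{Labar19,PE23} so that every surviving bracket term scales like $\varepsilon$ per period, while the remainder is $o(\varepsilon)$ uniformly on $K$. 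This uses only that the $f_i$ are smooth, hence bounded with their derivatives on a neighborhood of $K$. Summing over $O(T_f/\varepsilon)$ periods and applying a discrete Gronwall inequality gives
$$\sup_{t\in[t_0,t_0+T_f]}\|x(t)-\bar x(t)\|\le\delta(\varepsilon),\qquad \delta(\varepsilon)\to0.$$
Equivalently, as noted in \cite{PE23}, the LBS terms can be identified with the averaged vector field of a suitable near-identity transformed system, and standard averaging estimates \cite{khalil2002nonlinear} apply.

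\emph{Step 2: from LBS stability to practical stability.} Uniform asymptotic stability of the compact set $S$ for \eqref{eqn:LBS_main} gives, by converse Lyapunov theorems, a smooth $V$ with $\alpha_1(\mathrm{dist}(x,S))\le V(x)\le\alpha_2(\mathrm{dist}(x,S))$ and $\dot V\le-\alpha_3(\mathrm{dist}(x,S))$ along the LBS. Equivalently, the LBS has a class-$\mathcal{KL}$ bound. Given target radii $0<\nu<\rho$, we choose $T_f$ such that LBS trajectories starting in the sublevel set $\{V\le c_\rho\}$ enter $\{V\le c_\nu/2\}$ within time $T_f$. We then choose $\varepsilon_0$ so small that $\delta(\varepsilon)$ is below the gap between the relevant level sets. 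On each interval $[t_0+kT_f,\,t_0+(k+1)T_f]$ we restart the comparison at the current state. By uniformity of Step 1 over the compact set $K=\{V\le c_\rho\}$, the original trajectory stays in $K$, reaches and remains in a $\nu$-neighborhood of $S$, and satisfies the practical stability bound. This yields local practical uniform asymptotic stability. In the global case, an arbitrary compact set of initial conditions fits inside some sublevel set of $V$, and repeating the argument there gives semi-global practical stability.

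\emph{Main obstacle.} The delicate step is Step 1 for the higher-order truncation. We must show that, with the scalings $\varepsilon^{-p_i}$ and frequencies $k_i$, every iterated integral of order up to four either contributes exactly the coefficient in $L_1,L_2,L_3$ at order $\varepsilon$ per period, or is of higher order. Mixed-order terms, in particular, must not produce unbounded contributions. We would handle this by the bookkeeping of \cite[Section 4]{PE23}. The resonance conditions on the rational $k_i$ determine which integrals average to zero, and the bound on the remainder comes from the smoothness of the $f_i$ on compact sets.
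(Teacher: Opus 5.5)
Your argument is correct and follows the route the paper relies on. You first establish uniform closeness of trajectories on compact time intervals, via a per-period Chen--Fliess expansion with discrete Gronwall (or, equivalently, averaging as in \cite{PE23}). You then use a Moreau--Aeyels-type restart argument on sublevel sets of a converse Lyapunov function for the LBS. The paper gives no proof of its own; it cites \cite{DurrAuto,GZE18,PE23} and summarizes this same closeness-then-practical-stability logic in the remark following the lemma. You also correctly take from \cite{PE23} the compatibility conditions on the $p_i$, $k_i$ that the statement leaves implicit: bracket coefficients whose $\varepsilon$-scaling would be unbounded must vanish by non-resonance, and the remainder must be $o(\varepsilon)$ per period.
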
	
\begin{remark} 
\textcolor{black}{Lemma 1 is standard in ES literature and it means that for any positive time $T>0$ trajectories of the ES system \eqref{eqn:controlAffineIntro} stays close to corresponding trajectories of the LBS \eqref{eqn:LBS_main} truncated at some $r$; hence, if the LBS converges asymptotically to an equilibrium point, the original ES system  ~\eqref{eqn:controlAffineIntro} stays close to that equilibrium while the error between \eqref{eqn:controlAffineIntro} and \eqref{eqn:LBS_main} can be made small as needed as $\varepsilon \to 0$. The reader can refer \cite{DurrAuto,GZE18,PE23} for formal definitions due to space limitation.}
\end{remark}
Now, let us recap the approach from \cite{GZE18} given its relevance to the contributions of this paper. For a special class of \eqref{eqn:controlAffineIntro} when $f_0=0$, $p_i=0.5$, $k_i=1$ for all $i$, the result of Lemma~\ref{dthm} with $r=1$ (i.e. first-order LBS), can be exploited for solving the extremum seeking problem in the following way~\cite{DurrAuto,GZE18}: let us consider a class of ES systems of the form
\begin{equation}\label{int}
\dot x=\frac{1}{\sqrt\varepsilon}\Big(F_1(J(x))u_{1}^\varepsilon( t)+F_{2}(J(x)) u_{2}^\varepsilon( t)\Big),
\end{equation}
where  $F_1\circ J,F_2\circ J\in C^2(D)$ satisfy the relation \textcolor{black}{ $[F_1,F_2](z)=1$} for all $z\in\mathbb R$ (which implies $[F_1\circ J,F_2\circ J](x)=\nabla J(x)$), $u_1^\varepsilon(t)=2\sqrt{{\pi}{\varepsilon^{-1}}}\cos\big({2\pi  t}{\varepsilon^{-1}}\big)$, $u_2^\varepsilon(t)=2\sqrt{{\pi}{\varepsilon^{-1}}}\sin\big({2\pi  t}{\varepsilon^{-1}}\big)$, and assume that the cost function $J$ satisfies the following properties in a domain $D\subseteq\mathbb R^n$~\textcolor{black}{\cite{GZE18}}:
\begin{assumption}
The function $J\in C^m(D,\mathbb R^n)$, and
\begin{itemize}
    \item [A1.1) ] there exists an $x^*{\in} D$  such that   $\nabla J(x)=0$   if and only if $x=x^*,$
    and $J(x^*)=J^*{\in}\mathbb R$, $J(x)>J(x^*)$ for all $x\in D{\setminus}\{x^*\}$.
    \item[A1.2) ] There exist constants $\alpha_1,\alpha_2,\beta_1,\beta_2,\mu$, and $m\ge 1$, such that, for all {$x\in D$},
     $$ \begin{aligned}
     \alpha_1\|x{-}x^*\|^{m} \le &J(x)-J^* \le \alpha_2\|x{-}x^* \|^{m},\\
      \beta_1 (J(x)-J^*)^{1{-}\frac{1}{m}}\le  & \|\nabla J(x)\|\le\beta_2 (J(x)-J^*)^{1{-}\frac{1}{m}},\\
    \left\|\frac{\partial^2 J(x)}{\partial x^2}\right\|{\le}&\mu (J(x)-J^*)^{1{-}\frac{2}{m}}.
    \end{aligned}
    $$
\end{itemize}
\begin{remark}
    \textcolor{black}{Assumption A1.1) states that the cost function $J$ possesses an isolated local minimum at $x^*$,where it attains the value  $J^*$. We generalized the smoothness condition to $C^m(D,\mathbb R^n)$ as opposed to $C^2(D,\mathbb R^n)$ in \cite{GZE18} given the higher-order LBS approach used in this paper. Assumption A1.2) reflects the requirement that $J$ is convex and exhibits a local behavior similar to that of a power function. For example, if $m=2$, the local behavior of $J$ is quadratic (similar to classic and Lie-bracket-based ES methods \cite{Kr00,yilmaz2025exponential,DurrAuto,GZE18,yilmaz2025unbiased}) and if $m>2$, $J$ behaves locally like a higher-order polynomial (i.e., non-quadratic, flat-bottomed). That is, a Taylor expansion of $J$ near $x^*$ is dominated by a polynomial of power $m$ term. We note that A1.2) is relevant to the literature results in \cite{GZE18} and some of the following re-called results. However, we will refine, simplify and customize A1.2) later in Assumption 2 for our new results for higher-order.}   
\end{remark}
\end{assumption}
For first-order Lie bracket ES, It can be shown that the point $x=x^*$ is practically asymptotically stable for~\eqref{int}, with the convergence rate dependent on the parameter $m$ in A1.2). Namely, the following result follows from~\cite{GZE18}:
\begin{lemma}\label{lem_decay}
  \textit{If the cost function $J\in C^2(\mathbb R^n;\mathbb R)$ satisfies  Assumption~1 in a domain $D\subset \mathbb R^n$, then $x^*$ is
 practically exponentially stable for system~\eqref{int} if $m=2$, and $x^*$ is  practically asymptotically stable for system~\eqref{int} if $m>2$.
Namely, for any $\delta$ such that $\overline{B_\delta(x^*)}\subset D$,  any $\bar\gamma\in(0,\alpha\beta_1)$, and $\rho\in(0,\delta)$, there exists an $\bar\varepsilon>0$ such that, for any $\varepsilon\in(0,\bar\varepsilon]$,
$\gamma\in(0,\bar\gamma]$, the solutions of system~\eqref{int} with $x^0{\in} B_{\delta}(x^*)$ exhibit the following decay rate:
\begin{itemize}
    \item if $m=2$, then
    $$\|x(t)-x^*\|\le \lambda_{m}\|x^0-x^*\|e^{-\gamma t}+\rho;$$
    \item if $m>2$, then
    $$
    \|x(t)-x^*\|\le \Big(\lambda_{m}\|x^0-x^*\|^{2-m}+\tilde  \lambda_m t\Big)^{-1/(m-2)}+\rho.
    $$
\end{itemize}
Here $ \gamma_m,\tilde  \gamma_m$ are positive constants; $ \gamma_m$  can be made arbitrarily close to 1 by choosing a sufficiently small $\varepsilon$. }
\end{lemma}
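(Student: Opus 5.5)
The plan is the standard two-step route: study the first-order Lie bracket system associated with \eqref{int}, then transfer its decay estimates to \eqref{int} through Lemma~\ref{dthm}, made quantitative.

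\textbf{Step 1 (averaged system).} With $u_1^\varepsilon,u_2^\varepsilon$ as given, the first-order LBS \eqref{eqn:LBS_main} with $r=1$ has a unit coefficient in front of the bracket. Using $[F_1\circ J,F_2\circ J](x)=\nabla J(x)$, it reduces, up to the sign convention fixed by the choice of $F_1,F_2$, to the gradient flow
\[
\dot{\bar x}=-\nabla J(\bar x).
\]

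\textbf{Step 2 (decay of the averaged system).} Take $V(\bar x)=J(\bar x)-J^*$. Along solutions,
\[
\dot V=-\|\nabla J(\bar x)\|^2\le -\beta_1^2 V^{2-2/m}.
\]
\begin{itemize}
\item If $m=2$, this reads $\dot V\le-\beta_1^2V$. Hence $V$ decays exponentially, and the sandwich $\alpha_1\|\bar x-x^*\|^2\le V\le\alpha_2\|\bar x-x^*\|^2$ gives $\|\bar x(t)-x^*\|\le\sqrt{\alpha_2/\alpha_1}\,\|\bar x^0-x^*\|e^{-\beta_1^2t/2}$.
\item If $m>2$, integrate the scalar inequality $\dot V\le -cV^{q}$ with $q=2-2/m>1$ by comparison. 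This gives $V(t)^{1-q}\ge V(0)^{1-q}+c(q-1)t$.
\end{itemize}
In the case $m>2$, note that $1-q=(2-m)/m$. Converting back with $\alpha_1,\alpha_2$ yields
\[
\|\bar x(t)-x^*\|\le\big(\lambda_m\|\bar x^0-x^*\|^{2-m}+\tilde\lambda_m t\big)^{-1/(m-2)},
\]
with explicit constants. For both cases to apply, the trajectory must stay in $D$. This holds for initial data in $B_\delta(x^*)$ after shrinking to a sublevel set of $V$ contained in $\overline{B_\delta(x^*)}\subset D$, which the norm bounds on $V$ allow.

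\textbf{Step 3 (transfer to \eqref{int}).} Lemma~\ref{dthm} gives closeness of trajectories on finite intervals: for any $T$ and $\rho'$ there is $\bar\varepsilon$ with $\|x(t)-\bar x(t)\|\le\rho'$ on $[0,T]$ whenever $x^0=\bar x^0\in B_\delta(x^*)$. The $C^2$ regularity and the Hessian bound in A1.2) provide the uniform Lipschitz estimates needed on the compact set.

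To pass from a finite horizon to an infinite one, I would use the usual restart argument of \cite{DurrAuto,GZE18}. Choose $T$ so that the LBS bound at time $T$ falls below $\rho/2$. Apply the closeness estimate on $[0,T]$. Afterwards, use the uniform attractivity of the small ball and re-apply the estimate from the new initial point, which lies inside the basin. This keeps the solution within $\rho$ of $x^*$ for all later times.

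\textbf{Main obstacle.} The rate constants in the exponential case must survive the transfer. I expect this to be handled by comparing with a slightly slower rate $\gamma<\bar\gamma$, absorbing the $O(\varepsilon)$-type discrepancy into the constants. This is why the prefactor can be made arbitrarily close to its averaged value as $\varepsilon\to0$, and why $\gamma$ ranges up to $\bar\gamma$ but not beyond.
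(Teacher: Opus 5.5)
Your route works, but it is not the one the paper relies on. The paper gives no proof of this lemma and defers to \cite{GZE18}, whose argument is the one reproduced in the proof of Theorem~\ref{thm_2inp}. That argument runs as follows:
\begin{itemize}
\item expand the solution of \eqref{int} over a single dither period by the Chen--Fliess series, $x(\varepsilon)=x^0-\varepsilon\nabla J(x^0)+R(\varepsilon)$, with an explicit bound $\|R(\varepsilon)\|\le c_R\varepsilon^{3/2}$ on a compact set;
\item derive a one-step decrease of a Lyapunov function;
\item treat the cases $\|x^0-x^*\|\ge\hat\rho$ and $\|x^0-x^*\|<\hat\rho$ separately; in the first case the remainder is paid for by giving up $\bar\gamma-\gamma$ of the rate;
\item iterate over the instants $j\varepsilon$ and interpolate in between, which is exactly where the $\varepsilon$-dependent prefactor $e^{\gamma\varepsilon}\to 1$ comes from.
\end{itemize}
Neither the LBS nor Lemma~\ref{dthm} is used.

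You instead solve the averaged gradient flow in continuous time and transfer it through finite-horizon closeness and restarts. This is more modular, and nothing is lost in the transfer: the LBS rate and overshoot constant pass to \eqref{int} unchanged, with the whole averaging error absorbed into $\rho$. The price is threefold. First, $\bar\varepsilon$ is only existential, whereas the paper's $\varepsilon_0,\varepsilon_1,\varepsilon_2$ are explicit in terms of suprema of iterated Lie derivatives. Second, you need closeness uniform over a compact set of initial data. Third, the argument is intrinsically practical. It does not extend directly to vanishing-amplitude generating fields, which are typically not $C^2$ at $x^*$ and for which the one-period expansion in \cite{GZE18} yields genuine, non-practical stability.

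There are also corrections to make.
\begin{itemize}
\item Your ``main obstacle'' paragraph describes the mechanism of the paper's route, not yours. You need not slow the rate, and your prefactor does not depend on $\varepsilon$ at all.
\item With $V=J-J^*$ that prefactor is $\sqrt{\alpha_2/\alpha_1}$ for $m=2$ and $\lambda_m=(\alpha_1/\alpha_2)^{(m-2)/m}$ for $m>2$. So you do not obtain the near-$1$ prefactor the statement claims.
\item With overshoot $\lambda>1$, your restart must aim at a ball of radius about $\rho/(2\lambda)$, not $\rho/2$.
\item In $\mathbb R^n$ the norm bounds only place $\{V\le\alpha_2\delta^m\}$ inside $B_{(\alpha_2/\alpha_1)^{1/m}\delta}(x^*)$, which need not lie in $D$. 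Shrinking to a sublevel set therefore gives up part of $B_\delta(x^*)$.
\end{itemize}

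All of this is repaired in the scalar setting the lemma is written for (the paper says it has assumed $x\in\mathbb R$) by using $|\bar x-x^*|^2$ as the Lyapunov function. A1.1 gives $J'(x)(x-x^*)=|J'(x)|\,|x-x^*|$, hence $\frac{d}{dt}|\bar x-x^*|^2\le-2\beta_1\alpha_1^{1-1/m}|\bar x-x^*|^m$. This yields prefactor $1$ and keeps $|\bar x-x^*|$ nonincreasing, so no shrinking is needed. For $m=2$ it gives the rate $\beta_1\sqrt{\alpha_1}$, linear in $\beta_1$ like the threshold $\alpha\beta_1$ in the statement, instead of your $\beta_1^2/2$.
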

More technical details on the above decay rate estimates can be found in~\cite{GZE18}.

 For the sake of clarity, we have assumed here $x\in\mathbb R$, however, the above result can be easily extended to the case $x\in\mathbb R^n$~(see, e.g.,~\cite{DurrAuto,GZE18}).

\subsection{Main idea of the proposed approach}

As follows from Lemma~\ref{lem_decay}, the Lie bracket approximation approach provides a constructive solution to the extremum seeking problem which ensures the exponential convergence of the trajectories of system~\eqref{int} to the optimal point in the case of a quadratic-like cost function, i.e., for $m=2$ in Assumption A2). However, for cases $m>2$, the above algorithm ensures only a polynomial decay rate. This can also be observed by analyzing the first-order Lie bracket system associated with system~\eqref{int}:
$$
\dot{\bar x}=-\nabla J(\bar x).
$$
Note that the above first-order LBS is a complete average asymptote for \eqref{int}, meaning that higher-order LBS approximations will be redundant \cite{PE23}. Now, if for example $J=\dfrac{1}{2}(x-x^*)^{2}$, $x,x^*\in\mathbb R$, then the Lie bracket system is linear, $\dot{\bar x}=-(\bar x-x^*) $, and thus $x^*$ is its exponentially stable equilibrium point. In case $J=\frac{1}{4}(x-x^*)^{4}$, the Lie bracket system takes the form $\dot{\bar x}=-(\bar x-x^*)^3$, and its solutions are well-known to exhibit the polynomial decay rate $O(t^{-1/2})$ as $t\to\infty$~\cite{GZ13,G16}.  Assume now that we can associate the properties of system~\eqref{eqn:controlAffineIntro} with a system which gives access to the third-order derivative of $J$, namely, with the system 
$\dot{\tilde x}=-J^{(3)}(\tilde x)=-6(\tilde x-x^*)$. Then the latter system turns out to be linear again, which, under certain assumptions, may imply the practical exponential stability of the extremum seeking system. A natural way of accessing higher order derivatives of the cost function is to excite the Lie brackets of corresponding order~\cite{Labar19,PE23}. For example, it is easy to see that
$
[1,[1,[1,J]]](x)=J^{(3)}(x).
$
The main idea of this paper is to construct an extremum seeking system in the form
\begin{equation}
    \label{int_gen}
    \dot x=\sum_{k=1}^{n_u} g_k(J(x))u_k^\varepsilon(t),
\end{equation}
so that, under a special choice of control vector fields $g_k$, $k=1,..., n_u\in\mathbb N$, dither signals $u_k^\varepsilon\in L^\infty_{[0,\varepsilon]}$, and a parameter $\varepsilon>0$,
 its trajectories approximate the trajectories of a system with high order Lie brackets:
 \begin{equation}
     \label{Lie_gen}
     \dot{\bar x}=\sum_{\ell=1}^N\sum_{\mathcal I_\ell\in\mathcal S_\ell}c_{\mathcal I_\ell}g_{\mathcal I_\ell}(\bar x),
 \end{equation}
 where  $N\in\mathbb N$,  $\mathcal S_\ell\subset\{1,...,n_u\}^{\ell+1}$ denotes the sets of of multi-indices of the Lie brackets required for solving the extremum seeking problem, $g_{\mathcal I_\ell}$ are the corresponding Lie brackets, and $c_{\mathcal I_\ell}$ are constant parameters. For example, for system~\eqref{eqn:LBS_main} with $r=1$, $f_0=0$, $f_i=g_i$, $i\in\{1,...,n\}$, we mean $N=1$,   $\mathcal S_1=\{(i,j):1\le i<j\le n\}$, $\mathcal I_2:=(i,j)\in \mathcal S_1$, $c_{\mathcal I_2}=\sigma_{ij}$.
 
One of the main tools exploited in this paper is the Chen--Fliess series expansion: under certain regularity assumptions on the control vector fields of system~\eqref{int_gen}, the solutions of system~\eqref{int_gen} with $x(0)=x^0$ can be represented in the following form~\cite{La95,GZ23}:
\begin{equation}
    \label{series_gen}
    \begin{aligned}
     &  x(t)=x^0+\sum_{\ell=1}^N\sum_{k_1,..., k_{\ell}=1}^{n_u}L_{g_{k_{\ell}}}... L_{g_{k_2}}f_{g_{k_1}}(x^0) \\
       &\cdot\int\limits_0^t\int\limits_0^{s_1}... \int\limits_0^{s_{\ell}}u_{k_1}^\varepsilon(s_1)\cdot ... u_{k_\ell}^\varepsilon(s_\ell) ds_\ell... ds_1 + R(t),
    \end{aligned}
\end{equation}
with the remainder
$$
\begin{aligned}
      R(t)=&\hspace{-1em}\sum_{k_1,..., k_{N+1}=1}^{n_u}\int\limits_0^t\int\limits_0^{s_1}... \int\limits_0^{s_{N}}L_{g_{k_{N+1}}}... L_{g_{k_2}}g_{k_1}(x(s_{N+1})) \\
       &\cdot u_{k_1}^\varepsilon(s_1)\cdot ...  \cdot u_{k_{N+1}}^\varepsilon(s_{N+1}) ds_{N+1}... ds_1.
    \end{aligned}
$$
 
\section{Design of two-input extremum seeking system}

\subsection{Single-variable case}
To simplify the presentation, in this subsection, we assume $x\in \mathbb R$. To steer the solutions of an extremum seeking system towards the direction of high-order Lie brackets, we refer to control approaches from nonholonomic systems theory~\cite{Sus91,Liu97,Mich03,ZuSIAM,Gau14,Gau15,GZ23,GZ24_CDC}. We focus here on the two-input systems of form~\eqref{int_gen}: 
\begin{equation}\label{int_2}
\dot x=g_1(J(x))u_{1}^\varepsilon(t)+g_{2}(J(x)) u_{2}^\varepsilon(t).
\end{equation}
Suppose also  that the dithers $u_1^\varepsilon(t)$, $u_1^\varepsilon(t)$
excite the Lie bracket 
$$
  {\rm ad}_{g_1}^Ng_2(x)=\big[\underbrace{f_1,[f_1,...,[f_1}_{N\text{ times}},f_2]...]\big](z)
$$
at time $t=\varepsilon$,  in the sense that the Chen--Fliess series expansion~\eqref{series_gen}  takes the form
\begin{equation}
    \label{series_2ad}
    x(\varepsilon)=x^0+\varepsilon  {\rm ad}_{g_1}^Ng_2(J(x^0))+R(\varepsilon),
\end{equation}
and all the other Lie brackets of length from 1 to $N$ do not appear in the above expansion. 
One way to construct such inputs is described in~\cite{Gau14,Gau15}, other approaches can be found in, e.g.,~\cite{Sus91,Liu97,PE23}. 
In particular, building on findings of~\cite{Gau14,GZ24_CDC}, we derive the following result on generating the Lie bracket
${\rm ad}_{g_1}^Ng_2$.
\begin{theorem}
\textit{Given system~\eqref{int_2}, let $u_1^\varepsilon(t)=\varepsilon^{-\tfrac{N}{N+1}}c_1v_1^\varepsilon(t)$, $u_2^\varepsilon(t)={\varepsilon^{-\tfrac{N}{N+1}}}c_2v_2^\varepsilon(t)$ with
\begin{equation}
    \label{v_gen_odd}
v_1^\varepsilon(t)= \cos\Big(\frac{2\kappa\pi t}{\varepsilon}\Big), 
v_2^\varepsilon(t)= \sin\left(\frac{2N\kappa\pi t}{\varepsilon}\right),\\
\end{equation}
if $N$ is odd, and 
\begin{equation}
    \label{v_gen_even}
v_1^\varepsilon(t)=\cos\Big(\frac{2\kappa\pi t}{\varepsilon}\Big), 
v_2^\varepsilon(t)=\cos\left(\frac{2N\kappa\pi t}{\varepsilon}\right),\\
\end{equation}
    if $N$ is even. 
Here $\kappa\in\mathbb N$,
\begin{equation}
    \label{cn1cn2}
\begin{aligned}
c_1^Nc_2=(4\pi\kappa)^{N} N!(-1)^{\big[\tfrac{N}{2}\big]}
\end{aligned}
\end{equation}
Then the representation~\eqref{series_gen} for the solutions of system~\eqref{int_2} has the form~\eqref{series_2ad}, i.e.,
$$ x(\varepsilon)=x^0+\varepsilon {\rm ad}_{g_1}^Ng_2(J(x^0))+R(\varepsilon).$$}
\end{theorem}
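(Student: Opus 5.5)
The plan is to use the Chen--Fliess expansion \eqref{series_gen} at $t=\varepsilon$, truncated at $N+1$, together with its logarithmic (Chen--Strichartz) form, which rewrites the sum over words as a sum over Lie brackets. In that form, the coefficient of each bracket is a fixed linear combination of the iterated integrals of the dithers.

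\textbf{Step 1: scaling.} Substituting $s=\varepsilon\tau$ and using $u_i^\varepsilon=\varepsilon^{-N/(N+1)}c_iv_i^\varepsilon$, every iterated integral of length $\ell$ over $[0,\varepsilon]$ equals
$$\varepsilon^{\ell/(N+1)}\,c_{k_1}\cdots c_{k_\ell}\,I_{k_1\dots k_\ell},$$
where $I_{k_1\dots k_\ell}$ is the corresponding iterated integral over $[0,1]$ of $v_1=\cos(2\kappa\pi\tau)$ and $v_2=\sin(2N\kappa\pi\tau)$ or $\cos(2N\kappa\pi\tau)$. Terms of length $\ell=N+1$ are therefore exactly of order $\varepsilon$, and everything of length $\ge N+2$ is absorbed into $R(\varepsilon)$. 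It then remains to show two things: all bracket coefficients of length $\ell\le N$ vanish, and among length $N+1$ only ${\rm ad}_{g_1}^Ng_2$ survives, with coefficient $1$ after normalization.

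\textbf{Step 2: frequency (resonance) argument for the vanishing.} I would expand each $v_i$ into exponentials $e^{\pm 2\pi i\kappa\tau}$ and $e^{\pm 2\pi i N\kappa\tau}$. An iterated integral over a full period can then contribute to the bracket part only through a resonant combination, that is, a choice of signs making the total frequency zero. Words in a single letter give $(\int_0^1 v_i)^\ell/\ell!=0$ by the shuffle relation. A bracket containing exactly one $g_2$ and $j$ copies of $g_1$ needs $\pm N\pm1\pm\dots\pm1=0$, which forces $j\ge N$. Hence the only such bracket of length $\le N+1$ is ${\rm ad}_{g_1}^Ng_2$ itself (up to antisymmetry).

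Brackets with $k\ge2$ copies of $g_2$ are the delicate case. Resonant sign patterns do exist there, for instance $+N,-N,+1,-1$. For these I would argue that the coefficients produced by the logarithm are shuffle-symmetric combinations that cancel, essentially because the $g_2$-frequencies pair off among themselves and the remainder reduces to products of full-period integrals of single letters. This is the part where I would lean on the computations of \cite{Gau14,GZ24_CDC}, which treat exactly these commensurate frequency pairs $1$ and $N$. I also expect the choice of $\sin$ versus $\cos$ for $v_2$ according to the parity of $N$ to matter here: it ensures that the surviving length-$(N+1)$ coefficient is real and nonzero.

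\textbf{Step 3: computing the surviving coefficient.} This part is a direct computation. The coefficient of ${\rm ad}_{g_1}^Ng_2$ reduces to $\int_0^1 v_2(\tau)\,\frac{V_1(\tau)^N}{N!}\,d\tau$, possibly with a sign from the bracket ordering, where $V_1(\tau)=\frac{\sin(2\kappa\pi\tau)}{2\kappa\pi}$ is the zero-mean primitive of $v_1$. The factor $1/N!$ comes from the $N$-fold iterated integral of the same letter. Writing $\sin^N\theta$ as a trigonometric polynomial, its top harmonic has coefficient $2^{1-N}$ and is a $\cos(N\theta)$ term for $N$ even and a $\sin(N\theta)$ term for $N$ odd, with sign $(-1)^{[N/2]}$. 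This is exactly why $v_2$ is a cosine for even $N$ and a sine for odd $N$. Pairing this harmonic against $v_2$ and averaging gives
$$\pm\frac{(-1)^{[N/2]}}{(4\pi\kappa)^N\,N!}.$$
Multiplying by $c_1^Nc_2$ from \eqref{cn1cn2} and by $\varepsilon$ from Step 1 then yields $x(\varepsilon)=x^0+\varepsilon\,{\rm ad}_{g_1}^Ng_2(J(x^0))+R(\varepsilon)$. I would fix the overall sign by matching the convention $[f,g]=L_fg-L_gf$ in the explicit case $N=1$.

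\textbf{Main obstacle.} I expect the hardest step to be the rigorous cancellation in Step 2 of all brackets containing two or more copies of $g_2$ at lengths between $4$ and $N+1$. The frequency count alone does not exclude them, and the cancellation must come from the algebraic structure of the logarithm of the Chen--Fliess series.
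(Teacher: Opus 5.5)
\textbf{Route.} Your Steps 1 and 3 are correct, and they reproduce the paper's computation with different bookkeeping. The paper works at the level of words. It expands ${\rm ad}_{g_1}^Ng_2=\sum_{k=0}^N(-1)^k\binom{N}{k}L_{g_1}^{N-k}L_{g_2}L_{g_1}^k$ and computes each $I_{k,N+1}$ directly. In effect $I_{k,N+1}\propto\frac{(-1)^k}{k!(N-k)!}\int_0^1 v_2V_1^N\,d\tau$: the $k$ outer copies of $v_1$ give $(V_1(1)-V_1)^k/k!=(-V_1)^k/k!$, and the inner ones give $V_1^{N-k}/(N-k)!$. The binomial coefficients then reassemble the bracket.

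Your log-signature version skips the expansion, because the multidegree-$(N,1)$ part of the free Lie algebra is spanned by ${\rm ad}_1^N2$. So the single number $\frac1{N!}\int_0^1 v_2V_1^N\,d\tau$ is the coefficient. The word computation also settles your $\pm$. With the ordering in \eqref{series_gen} (outermost time $s_1$ paired with the innermost field $g_{k_1}$), the coefficient is $+\frac1{N!}\int_0^1v_2V_1^N\,d\tau$ for every $N$, so no calibration at $N=1$ is needed.

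\textbf{The gap.} The real gap is the one you flag in Step 2: brackets with at least two copies of $g_2$. For exactly one copy your count is fine, since the coefficient is $\frac1{j!}\int_0^1v_2V_1^j\,d\tau=0$ for $j<N$. The paper does not close the multi-$g_2$ case either. It only asserts that, because of the resonance relation, all iterated integrals of length $\le N$ vanish, and \eqref{x_ad_N} silently drops the length-$(N+1)$ words containing $v_2$ more than once.

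Frequency counting cannot justify this pattern by pattern. An adjacent $\pm N$ (or $\pm1$) pair produces a secular term, and $\int_0^1\tau e^{2\pi i m\tau}d\tau=1/(2\pi i m)\neq0$. So sign patterns with nonzero total frequency do contribute to individual words. For example, the pattern $(+1,+N,-N)$ in the word $v_1v_2v_2$ (outermost letter first) gives $1/(N(2\pi\kappa)^2)$.

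Your proposed cancellation mechanism does not work either. At the lowest nonvanishing degree the logarithm coincides with the signature, and the paired patterns are genuinely nested integrals, not products of single-letter period integrals. What kills them is a symmetry of the inputs.

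\textbf{How to close it.} Rescale to $\tau\in[0,1]$ and let $S_{a,b}$ denote the signature over $[a,b]$.
\begin{enumerate}
\item[(i)] \emph{Only zero total frequency survives.} Because the inputs are $1$-periodic, $S_{\phi,\phi+1}=S_{0,\phi}^{-1}S_{0,1}S_{0,\phi}$. Hence the lowest-degree nonzero component $L_d$ of $\log S_{0,1}$ is the same for every window $[\phi,\phi+1]$. Lower components vanish, so $L_d$ is also the degree-$d$ part of $S_{\phi,\phi+1}$. Expand $v_a(\tau+\phi)$ in exponentials. A sign pattern whose $v_1$-signs sum to $\sigma_1$ and whose $v_2$-signs sum to $\sigma_2$ carries the factor $e^{2\pi i\kappa(\sigma_1+N\sigma_2)\phi}$. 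Independence of $\phi$ leaves only patterns with $\sigma_1+N\sigma_2=0$. For multidegree $(j,k)$ with $j+k\le N+1$, this forces either $(j,k)=(N,1)$ or $\sigma_1=\sigma_2=0$, i.e.\ $j$ and $k$ both even.
\item[(ii)] \emph{Time reversal kills the paired patterns.} On the window $[-\tfrac12,\tfrac12]$, which has the same $L_d$ by (i), $v_1$ is even, and $v_2$ is even for $N$ even and odd for $N$ odd. So the reversed path is $RX$, with $R=-\mathrm{id}$ for $N$ even and $R=\mathrm{diag}(-1,1)$ for $N$ odd. From $S^{-1}=R_*S$ we get $-\log S=R_*\log S$. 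Thus only components of odd total degree survive for $N$ even, and only components with an odd number of $g_1$'s survive for $N$ odd. Either way, multidegrees with $j,k$ both even are ruled out. The target $(N,1)$ survives precisely because of the parity choice of $v_2$; this, and not only the nonvanishing of $\int_0^1v_2V_1^N$, is where the $\cos$/$\sin$ choice enters.
\end{enumerate}
Hence $d\ge N+1$ and $L_{N+1}\in\mathrm{span}\{{\rm ad}_1^N2\}$. Since $S=\exp(\log S)$, every word of length $\le N$, and every length-$(N+1)$ word with $k\neq1$ copies of $v_2$, has zero iterated integral. That is exactly what \eqref{x_ad_N} needs, and your Step 3 then supplies the coefficient.
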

\begin{proof}
By induction, we have
\begin{equation}
    \label{ad_N}
    {\rm ad}_{g_1}^Ng_2(x)=\sum_{k=0}^N(-1)^k\left(\begin{matrix}
    N\\
    k
\end{matrix}\right)L_{g_1}^{N-k}L_{g_2}L_{g_1}^k(x),
\end{equation}
with $L_{g_1}^{i}L_{g_2}L_{g_1}^j(x):=\underbrace{L_{g_1}... L_{g_1}}_{i\text{ times }}L_{g_2}\underbrace{L_{g_1}... L_{g_1}}_{j-1\text{ times}}g_1(x)$ for $i,j+1\in\mathbb N$, and $L_{g_1}^{i}L_{g_2}L_{g_1}^0(x):=\underbrace{L_{g_1}... L_{g_1}}_{i\text{ times }}L_{g_2}g_1(x)$.
Because of the resonance relation between frequencies of the inputs $v_1$ and $v_2$, all the iterated integrals of length up to  $N$  in the representation~\eqref{series_gen} vanish, which yields
\begin{equation}
    \label{x_ad_N}
     x(\varepsilon)=x^0+ \frac{c_1^Nc_2}{\varepsilon^N}\sum_{k=0}^{N}L_{g_1}^{N-k}L_{g_2}L_{g_1}^k(x^0) I_{k,N+1}(\varepsilon) + R(\varepsilon),
\end{equation}
where
$$
 \begin{aligned}
 I&_{k,N+1}(\varepsilon)=\int\limits_0^\varepsilon\int\limits_0^{s_1}... \int\limits_0^{s_{N+1}}v_{ 1}^\varepsilon(s_1)\cdot...\cdot v_{ 1}^\varepsilon(s_k) \\
 & \cdot v_{ 2}^\varepsilon(s_{k+1}) v_{ 1}^\varepsilon(s_{k+2})\cdot... \cdot v_{ 1}^\varepsilon(s_{N+1})  ds_{N+1}... ds_1.
    \end{aligned}
$$
Direct calculation gives that, for $0\le k\le N$,
\begin{equation}
    \label{I_kN}
    \begin{aligned}
I_{k,N+1}(\varepsilon)=\frac{\varepsilon^{N+1}(-1)^{\big[\tfrac{N}{2}\big]+k }}{(4\pi\kappa)^N k!(N-k)!}.
\end{aligned}
\end{equation}
Thus,
$$
\begin{aligned}
 &c_1^Nc_2 \sum_{k=0}^{N}L_{g_1}^{N-k}L_{g_2}L_{g_1}^k(x^0) I_{k,N+1}(\varepsilon)\\
  &=  \frac{\varepsilon^{N+1}(-1)^{\big[\tfrac{N}{2}\big]}}{(4\pi\kappa)^N }\sum_{k=0}^N\frac{(-1)^{k}}{k!(N-k)!}L_{g_1}^{N-k}L_{g_2}L_{g_1}^k(x^0)\\
 & \overset{\ \eqref{cn1cn2}\ }{\ =\ }\varepsilon^{N+1} (-1)^{2\big[\tfrac{N}{2}\big]}
  \sum_{k=0}^N\frac{(-1)^{k}N!}{k!(N-k)!}L_{g_1}^{N-k}L_{g_2}L_{g_1}^k(x^0)\\
&   \overset{\ \eqref{ad_N}\ }{\ =\ }\varepsilon^{N+1}{\rm ad}_{g_1}^Ng_2(J(x^0)).
\end{aligned}
$$
Substitution of the obtained value in~\eqref{x_ad_N} completes the proof.

\end{proof}

Theorem~1 aligns with the cases considered in~\cite{GZ24_CDC,grushkovskaya2025extremum} for $N=1,2,3$:
    \begin{itemize}
    \item $N=1$: the inputs
    $v_1^\varepsilon(t)=2\sqrt{\kappa\pi}\cos\left({2\kappa\pi t}/{\varepsilon}\right)$, \\
    $v_2^\varepsilon(t)=2\sqrt{{\kappa\pi}}\sin\left({2\kappa\pi t}/{\varepsilon}\right)$, $\kappa\in\mathbb Z$,\\
    excite   $[g_1,g_2]$;
    \item $N=2$: the inputs \\
    $v_1^\varepsilon(t)=\left({4\kappa\pi }\right)^{\frac23}\cos\left({2\kappa\pi t}/{\varepsilon}\right)$,\\
      $v_2^\varepsilon(t)=-2\left({4\kappa\pi }\right)^{\frac23} \cos\left({4\kappa\pi t}/{\varepsilon}\right)$,\\ $\kappa\in\mathbb Z$,
         excite the  Lie bracket $[g_1,[g_1,g_2]]$;
      \item  $N=3$: the inputs \\
      $v_1^\varepsilon(t)=2\left(2\kappa\pi\right)^{\frac34}\sin\left({2\kappa\pi t}/{\varepsilon}\right)$,\\
      $v_2^\varepsilon(t)= -6\left(2\kappa\pi\right)^{\frac34}\cos\left({6\kappa\pi t}/{\varepsilon}\right)$, $\kappa\in\mathbb Z$,
         excite the  Lie bracket $\big[g_1,[g_1,[g_1,[g_1,g_2]]]\big]$.
\end{itemize}
Some further choices will be considered in Section~IV.
\begin{remark}[about relation to~\cite{PE23}]
    By examining \eqref{eqn:LBS_main} in light of the design choices above, we guarantee that we achieve a complete average asymptote for the LBS as in \cite[Section 4.1]{PE23} (i.e., higher-order LBS approximations are made redundant by that design in that they vanish as $\varepsilon \rightarrow 0$). That is, for $N=1$ our design choice of $v_1$ and $v_2$ to excite $[g_1,g_2]$ guarantees that $p_1+p_2=1/2+1/2=1$, which guarantees bounded first-order LBS as $\varepsilon \rightarrow 0$. Same hold for $N=2$ as our design choices of $v_1$ and  $v_2$ to excite $[g_1,[g_1,g_2]]$ guarantees that $p_1+p_1+p_2=2/3+2/3+2/3=2$. Due to non-resonance conditions, coefficients resulting from the iterated integrals for first-order terms and other second-order terms (except the excited bracket) vanish. This also leads to a bounded second-order LBS as $\varepsilon \rightarrow 0$.
    Lastly, the design choices of $v_1$ and  $v_2$ to excite $\big[g_1,[g_1,[g_1,g_2]]]\big]$ guarantees that $p_1+p_1+p_1+p_2=3/4+3/4+3/4+3/4=3$. Due to non-resonance condition, coefficients resulting from the iterated integrals for first-order terms, second-order terms and other third-order terms (except the excited bracket) vanish. This also leads to a bounded third-order LBS as $\varepsilon \rightarrow 0$.
\end{remark}

Assume further that $g_1,g_2$ are chosen in such a way that the Lie bracket $ {\rm ad}_{g_1}^Ng_2(J(x))$ has the form
\begin{equation}\label{g_choice}
 {\rm ad}_{g_1}^Ng_2(J(x))=-\sigma J^{(N)}(x),
\end{equation}
 and some $\sigma >0$ playing a role of control gain parameter. 
In this context, $ {\rm ad}_{g_1}^Ng_2(J(x))$ denotes the corresponding Lie bracket computed with the compositions of functions $g_1\circ J$, $g_2\circ J$.
The most obvious choice of the vector fields satisfying relation~\eqref{g_choice} is $g_1(z)\equiv 1$, $g_2(z)=-z$. For the first-order Lie brackets, the whole family of functions $g_1,g_2$ satisfying the relation
\begin{equation}
    \label{g_choice_2}
    [g_1\circ J,g_2\circ J](x)=-\varphi(J(x))\nabla J(x),
\end{equation}
with any given continuous  function $\varphi$,
has been introduced in~\cite{GZE18}.  Then the expansion~\eqref{series_2ad} takes the form
\begin{equation}
    \label{series_2eps}
    x(\varepsilon)=x^0-\varepsilon \sigma J^{(N)}(x^0)+R(\varepsilon), 
\end{equation}
and, similarly to the approach of~\cite{GZE18}, the practical asymptotic stability of $x^*$ for system~\eqref{int_2} can be proved. We proceed by summarizing the key results of this subsection and integrating them into  the context of solving the extremum seeking problem. For this purpose, we further  specify the properties of  $J$ as a polynomial-like single-variable function:
\begin{assumption}
    The function $J\in C^m(D,\mathbb R^{m})$ \textcolor{black}{satisfies assumption A1.1)}, $D\subset \mathbb R$, with some $m\ge 2$, and
there exist constants  $\alpha_{1},\alpha_{2},
\beta_{1},\beta_{2}$, such that, for all $x\in D$,
     $$ 
     \begin{aligned}
   \alpha_{1}\|x-x^*\|^{m} \le  J(x)-J^* & \le \alpha_{2}\|x-x^* \|^{m},\\
    J^{(m-1)}(x)(x-x^*)&\ge\beta_{1}\|x-x^*\|^2,\\
  \|J^{(m-1)}(x)\|&\le \beta_{2}\|x-x^*\|.
    \end{aligned}
    $$
\end{assumption}
\begin{remark}
    \textcolor{black}{Assumption 2 means that $J$ is convex and has an isolated minimum in the domain $D$ at $x^*$ attained at $J^*$. Similar to A1.2), it reflects the need for $J$ to behave locally like a polynomial of power $m$ but with clear emphasis on the local behavior of the $(m-1)$ derivative of $J$, $J^{(m-1)}$ representing a linear-like term. For example, if $J=x^6$, then $m=6$, $x^*=J^*=0$ and $J^{(m-1)}=(6!)(x)$. Clearly $(6!)(x)$ is bounded above by a linear function and if multiplied by a linear term, it can be bounded below by a quadratic function.      
    }
\end{remark}
\begin{figure}[t]
    \centering
    \includegraphics[width=1\linewidth]{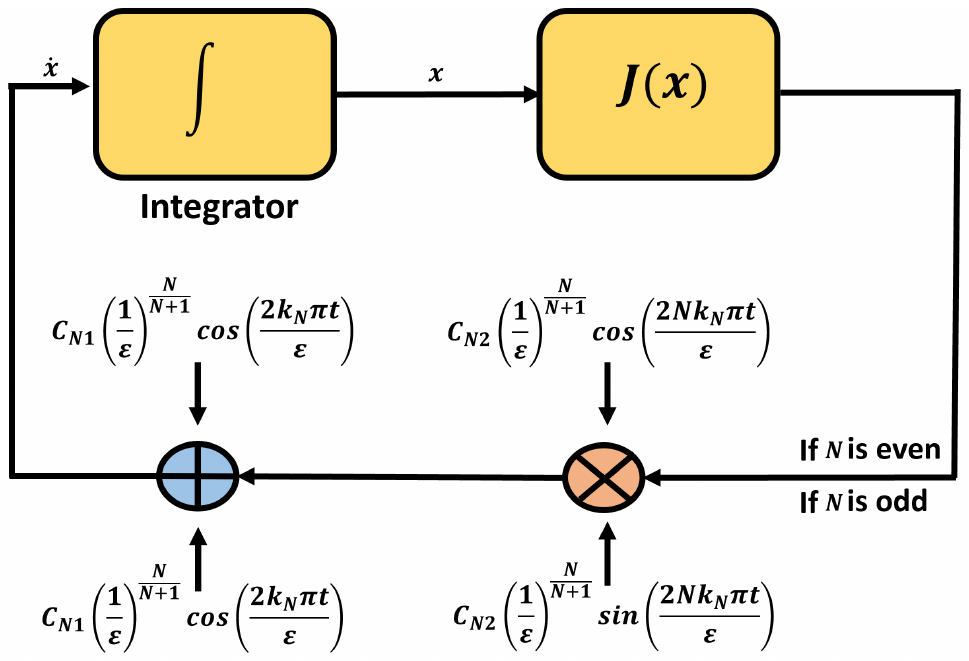}
    \caption{A schematic representation for the generalized two-input control design proposed in Theorem 1. For an odd $N$ the two input excitation signals follow \eqref{v_gen_odd} and for an even $N$ the two input excitation signals follow \eqref{v_gen_even}. The coefficients $c_1$ and $c_2$ can be chosen as desired as long as the condition in \eqref{cn1cn2} is satisfied.}
    \label{block_diagram}
\end{figure}
Under the above assumption, the control inputs introduced in Theorem~1 can be used for designing extremum seeking algorithms with exponential convergence to an arbitrary small neighborhood of $x^*$, as it is stated in the following theorem (see also Fig.~\ref{block_diagram} for a schematic representation).
\begin{theorem}\label{thm_2inp}
\textit{Given system~\eqref{int_2}  and a cost function $J$ satisfying Assumption~2, let the vector fields $g_1,g_2\in C^m(D;\mathbb R^n)$ satisfy the relation~\eqref{g_choice}, and let $u_1^\varepsilon,u_2^\varepsilon$ be defined as in Theorem~1 with  $N=m-1$. Then the point $x^*$ is semi-globally practically exponentially stable for system~\eqref{int_2}.
Namely, for any $\delta$ such that $\overline{B_\delta(x^*)}\subset D$,  any $\bar\gamma\in(0,\sigma \beta_1)$, and $\rho\in(0,\delta)$, there exists an $\bar\varepsilon>0$ such that, for any $\varepsilon\in(0,\bar\varepsilon]$,
$\gamma\in(0,\bar\gamma]$, the solutions of system~\eqref{int} with $x^0{\in} B_{\delta}(x^*)$ exhibit the following decay rate:}\\
    $\|x(t)-x^*\|\le \lambda \|x^0-x^*\|e^{-\gamma t}+\rho\text{ for al }t\ge0.$
\end{theorem}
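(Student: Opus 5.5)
Theorem~\ref{thm_2inp} follows by a sampled-data (discrete-time) Lyapunov argument built on the expansion \eqref{series_2eps}. With $N=m-1$, Theorem~1 and relation~\eqref{g_choice} give
\[
x(\varepsilon)=x^0-\varepsilon\sigma J^{(m-1)}(x^0)+R(\varepsilon).
\]
This is one step of an explicit Euler scheme with step $\varepsilon$ for the averaged system $\dot{\bar x}=-\sigma J^{(m-1)}(\bar x)$. Under Assumption~2, that system is exponentially stable with the Lyapunov function $V(x)=\|x-x^*\|^2$:
\[
\dot V=-2\sigma J^{(m-1)}(x)(x-x^*)\le -2\sigma\beta_1 V .
\]
The plan is to transfer this decay to the samples $x(k\varepsilon)$ of system~\eqref{int_2}. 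Intersample behaviour is then controlled by a uniform bound on $\|x(t)-x(k\varepsilon)\|$ for $t\in[k\varepsilon,(k+1)\varepsilon]$.

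\textbf{Remainder estimate.} The first and key step is to bound $R(\varepsilon)$ in the Chen--Fliess expansion truncated after the terms of length $N+1$. Each term of $R$ carries $N+2$ input factors of size $\varepsilon^{-N/(N+1)}$ integrated over an $(N+2)$-dimensional simplex of volume $O(\varepsilon^{N+2})$. This gives
\[
\|R(\varepsilon)\|\le C\,\varepsilon^{N+2-\frac{N(N+2)}{N+1}}=C\,\varepsilon^{1+\frac{1}{N+1}},
\]
with $C$ depending on bounds of the Lie derivatives $L_{g_{k_{N+2}}}\cdots L_{g_{k_2}}g_{k_1}$ on a compact neighbourhood of $\overline{B_\delta(x^*)}$. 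This needs $g_i\circ J\in C^{N+1}=C^m$, which the hypotheses provide. An a priori Gronwall-type estimate, $\|x(t)-x^0\|\le C'\varepsilon^{1/(N+1)}$ for $t\in[0,\varepsilon]$, keeps the trajectory in that compact set, because $\int_0^t|u_i^\varepsilon|\le c\,\varepsilon^{1/(N+1)}$.

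\textbf{Contraction with an absolute remainder.} The main obstacle is that $R(\varepsilon)$ is only small in absolute terms. It is not proportional to $\|x^0-x^*\|$, since $g_1\equiv1$ does not vanish at $x^*$; this is precisely why only practical stability is claimed. I would write
\[
V(x(\varepsilon))\le V(x^0)-2\varepsilon\sigma J^{(m-1)}(x^0)(x^0-x^*)+\varepsilon^2\sigma^2\beta_2^2 V(x^0)+2\|x^0-x^*\|\|R\|+\|R\|^2 ,
\]
which yields
\[
V(x(\varepsilon))\le\bigl(1-2\sigma\beta_1\varepsilon+O(\varepsilon^2)\bigr)V(x^0)+c\,\varepsilon^{1+\frac1{N+1}}\bigl(\|x^0-x^*\|+1\bigr).
\]
Fix $\gamma<\bar\gamma<\sigma\beta_1$. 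Whenever $\|x^0-x^*\|\ge\rho/2$, the cross term is absorbed into the contraction margin $2(\sigma\beta_1-\gamma)\varepsilon V(x^0)$ once $\varepsilon$ is small, since $\varepsilon^{1/(N+1)}\to0$. This gives $V(x(\varepsilon))\le e^{-2\gamma\varepsilon}V(x^0)$ on that region. Inside $B_{\rho/2}(x^*)$, the same inequality keeps the next sample in a ball of radius below $\rho$. Hence that ball is forward invariant for the sampled system, and $B_\delta(x^*)$ is never left.

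\textbf{Iteration and uniformity.} Iterating over the intervals $[k\varepsilon,(k+1)\varepsilon]$, the equations being time-periodic with period $\varepsilon$, gives
\[
\|x(k\varepsilon)-x^*\|\le\|x^0-x^*\|e^{-\gamma k\varepsilon}
\]
until the samples enter the $\rho$-ball, after which they remain there. Adding the intersample deviation $O(\varepsilon^{1/(N+1)})$ and re-choosing $\rho$ and $\bar\varepsilon$ yields
\[
\|x(t)-x^*\|\le\lambda\|x^0-x^*\|e^{-\gamma t}+\rho ,
\]
with $\lambda=e^{\gamma\varepsilon}$ or a fixed constant. All constants depend only on $\delta$ through compactness, which gives the semi-global practical exponential statement. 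Alternatively, one may invoke Lemma~\ref{dthm} directly for the averaged system $\dot{\bar x}=-\sigma J^{(m-1)}(\bar x)$, but the explicit discrete argument gives the rate $\gamma$ as claimed.
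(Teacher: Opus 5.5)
Your proposal is correct and follows essentially the same route as the paper. Both arguments use the same ingredients:
\begin{itemize}
\item the one-period Chen--Fliess expansion $x(\varepsilon)=x^0-\varepsilon\sigma J^{(m-1)}(x^0)+R(\varepsilon)$ with $\|R(\varepsilon)\|\le c_R\varepsilon^{1+1/m}$;
\item the a priori bound $\|x(t)-x^0\|=O(\varepsilon^{1/m})$ on each period;
\item the squared-distance inequality, split into an inner ball that is forward invariant for the samples and its complement, where the contraction $e^{-2\gamma\varepsilon}$ holds;
\item iteration over periods plus an intersample bound, giving $\lambda=e^{\gamma\varepsilon}$.
\end{itemize}
One small slip: your expansion of $V(x(\varepsilon))$ omits the cross term $-2\varepsilon\sigma J^{(m-1)}(x^0)R(\varepsilon)$. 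It is $O(\varepsilon^{2+1/m}\|x^0-x^*\|)$ and is absorbed exactly as in the paper's constant $c_{x1}=2c_R(1+\varepsilon\sigma\beta_2)$.
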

\begin{proof}
The argumentation follows a similar line as the proof of practical exponential stability in \cite[Theorem 3]{GZE18}, so we only briefly highlight here the main steps that differ from that proof.
Given any $\delta>0$, let $D'$ be any compact set such that $\overline{B_\delta(x^*)}\subset D'\subset D$ . From the integral representation  of the solutions of  system~\eqref{int_2} with $x(0)=x^0\in\overline{B_\delta(x^*)}$ and $u^\varepsilon$ defined in the theorem, we obtain 
\begin{equation}
    \label{x_Est}
    \|x(t)-x^0\| \le  c_x\varepsilon^{\frac{1}{m+1}},
\end{equation}
 where $c_x= \sup\limits_{x\in D'}(c_1\|g_{k_1}\circ J(x)\|+c_2\|g_{k_1}\circ J(x)\|)$. Then taking $\varepsilon_0=(\frac{{\rm dist}(\delta D',\delta D)-\delta}{c_x})^{m+1}$ we ensure that the solutions of system~\eqref{int_2} with $x^0\in B_\delta(x^*)$ and $\varepsilon\in(0,\varepsilon_0)$ are well-defined in $D'$ for all $t\in[0,\varepsilon]$.
 
  To estimate the remainder in~\eqref{series_2eps}, denote \\
  $$
  \begin{aligned}
  c_{R}=\sup\limits_{x\in D'}\sum\limits_{k_1,..., k_{m+1}=1}^{2}&c_{k_1}\cdot...\cdot c_{k_{m+1}}\\
  &\cdot\|L_{g_{k_{m+1}}}... L_{g_{k_2}}g_{k_1}\circ J(x)\|,
  \end{aligned}
  $$ 
  Then it is easy to see that the remainder in~\eqref{series_gen} can be estimated as
   $$ \begin{aligned}
\|R(\varepsilon)\|&\le\varepsilon^{-m}\hspace{-1em}\sum_{k_1,..., k_{m+1}=1}^{2}\max_{0\le t\le\varepsilon}|u_{k_1}^\varepsilon(s_1)\cdot ...  \cdot u_{k_{m+1}}^\varepsilon(s_{m+1})|
\\
\cdot\int\limits_0^\varepsilon\int\limits_0^{s_1}&... \int\limits_0^{s_{m}}\|L_{g_{k_{m+1}}}... L_{g_{k_2}}g_{k_1}\circ J(x(s_{m+1}))  \| ds_{m+1}... ds_1\\
       &\le  c_R\varepsilon^{1+\frac1m}.
    \end{aligned}
  $$
The above estimate together with the representation~\eqref{series_2eps}, Assumption~2, and triangular inequality, implies that, for any $\varepsilon\in(0,\varepsilon_0)$,
$$
\begin{aligned}
& \|x(\varepsilon)-x^*\|^2=\|x^0-x^*-\varepsilon\sigma J^{(m-1)}(x^0)+R(\varepsilon)\|^2\\
&\le \|x^0-x^*\|^2(1-2\varepsilon\bar\gamma)+\varepsilon^{1+\tfrac{1}{m}}(c_{x1}\|x^0-x^*\|+c_{x2}),\\
\end{aligned}
$$
with $\bar\gamma=\beta_1\sigma $, $c_{x1}=2c_R(1+\varepsilon\sigma \beta_2)$, $c_{x2}=\varepsilon_0^{1-\frac{1}{m}}(\sigma ^2\beta_2^2+ \varepsilon_0^{\tfrac{2}{m}}c_R^2)$.
Given any $\rho\in(0,\delta)$ and any $\gamma\in(0,\bar\gamma)$,  let $\hat\rho\in(0,\rho)$, $\varepsilon_1=\left(\frac{2(\bar\gamma-\gamma)\hat\rho^2}{c_{x1}\hat\rho+c_{x2}}\right)^m$. Given any  $\varepsilon\in(0,\varepsilon_1)$,
consider two cases:\\
\emph{Case 1.} $\|x^0-x^*\|<\hat\rho$. Then 
$$
\begin{aligned}
  \|x(\varepsilon)-x^*\|^2&\le \hat\rho^2-\varepsilon(2\bar\gamma\hat\rho^2-\varepsilon^{\frac{1}{m}}(c_{x1}\hat\rho+c_{x2}))\\
  &\le \hat\rho^2(1-2\varepsilon\gamma)<\hat\rho<\delta.
\end{aligned}
$$
Since $x(\varepsilon)\in\overline{B_\delta(x^*)}$, we can repeat all the previous reasoning for the time interval $[\varepsilon,2\varepsilon]$ with the same parameters as before to ensure that the solutions of system~\eqref{int_2} are well-defined in $D'$ for all $t\in[\varepsilon,2\varepsilon]$.
Furthermore, as estimate~\eqref{x_Est}  holds for any $\varepsilon$-length time interval (whenever the solutions are well-defined in $D'$),
 we can take $\varepsilon_2=\big(\frac{\rho-\hat\rho}{c_x}\big)^{m+1}$ to ensure that $\|x(\varepsilon)-x(t)\|\le \rho-\hat\rho $ (and $<\rho$) for all $t\in[\varepsilon,2\varepsilon]$ provided that $\varepsilon\in(0,\varepsilon_2)$,
 and therefore,
$$
\begin{aligned}
    \|x(t)-x^*\| &\le \|x(\varepsilon)-x^*\|+\|x(\varepsilon)-x(t)\|\\
    &\le \hat\rho+\rho-\hat\rho=\rho \text{ for all }t\in[\varepsilon,2\varepsilon].
\end{aligned}
$$
 
\emph{Case 2.} $\|x^0-x^*\|\ge \hat\rho$. Then
$$
\|x(\varepsilon)-x^*\|^2\le \|x^0-x^*\|^2(1-2\varepsilon\gamma)\le \|x^0-x^*\|^2e^{-2\varepsilon\gamma}.
$$
Again,  $x(\varepsilon)\in\overline{B_\delta(x^*)}$, which implies that the solutions of system~\eqref{int_2} are well-defined in $D'$ for all $t\in[\varepsilon,2\varepsilon]$, and, in case $\|x(\varepsilon)-x^*\|\ge \hat\rho$, we get the estimate
$$
\|x(2\varepsilon)-x^*\|\le  \|x(\varepsilon)-x^*\|e^{-\varepsilon\gamma}\le \|x^0-x^*\|e^{-2\varepsilon\gamma},
$$
and, moreover, for all $t\in[\varepsilon,2\varepsilon]$,
$$
\begin{aligned}
    \|x(t)-x^*\| &\le \|x(\varepsilon)-x^*\|+\|x(\varepsilon)-x(t)\|\\
    &\le \|x^0-x^*\|e^{-\varepsilon\gamma}+\rho \text{ for all }t\in[\varepsilon,2\varepsilon].
\end{aligned}
$$
Repeating considerations from case 1) and case 2), we come to the following conclusion:  
$$
\begin{aligned}
& \|x(j\varepsilon)-x^*\|\le \|x^0-x^*\|e^{-j\varepsilon\gamma}\text{ for all }j=0,\dots,\mathcal N-1,\\
&\|x(t)-x^*\|\le \rho\text{ for all }t\ge \mathcal N\varepsilon,
\end{aligned}
$$
where  $\mathcal N $ is the smallest non-negative integer  number such that $\|x^0-x^*\|e^{-\mathcal N\varepsilon\gamma}<\rho$. 
For arbitrary $t\in[0,\infty)$,  denote the integer part of $t/\varepsilon$  as $t_{in}^\varepsilon$
  and note  that  $\|x(t)-x(t_{in}^\varepsilon\varepsilon)\|\le \rho$ for all $t\ge 0$ because $0\le t-t_{in}^\varepsilon\le \varepsilon$. Then
  $$
  \begin{aligned}
    \|x(t)-x^*\|&\le \|x(t_{in}^\varepsilon\varepsilon)-x^*\|+\|x(t)-x(t_{in}^\varepsilon\varepsilon)\|\\
    &\le \|x^0-x^*\|e^{-t_{in}^\varepsilon \varepsilon\gamma}+\rho\\
    &\le \lambda\|x^0-x^*\|e^{- \gamma t}+\rho\text{ for all }t\ge 0,
     \end{aligned}
  $$
with $\lambda=e^{\gamma\varepsilon}$.

\end{proof}

\begin{remark}[about regularity assumptions]
As in the paper~\cite{GZE18}, it is also possible to relax regularity assumption on the control vector fields. Namely, requirement of  $g_k\circ J$, $k=1,2$, being $m$ times continuously differentiable in $D$ can be replaced with the following: $g_k\circ J\in C^m(D\setminus\{x^*\});\mathbb R$ and  $L_{g_{k_{N+1}}}... L_{g_{k_2}}g_{k_1}\circ J\in C(D;\mathbb R)$ for all $N\in\{1,...,m\}$, $k_1,...,k_{m+1}\in\{1,2\}$. This relaxation is particularly important for deriving conditions for the ``classical'' exponential stability in the sense of Lyapunov, meaning that the trajectories of the extremum seeking system converge to the point
 $x^*$, rather than merely to its neighborhood. Another important condition for achieving classical exponential stability is the property of vanishing amplitudes, which requires that $g_k\circ J\to 0$  as $x\to x^*$ (see~\cite[Theorem~3, Part II]{GZE18}). Since selecting  such vector fields becomes increasingly challenging in the case of higher-order Lie brackets, we leave this task, along with a rigorous formulation of the corresponding exponential stability properties, for furture research.
\end{remark}
\begin{remark}[about decay rate]
Under the conditions of the theorem and the assumption $N = m-1$, the solutions of system~\eqref{int_2} converge to a neighborhood of $x^*$ with an exponential decay rate $O(e^{-\gamma t})$. Under additional assumptions, it is also possible to establish practical asymptotic stability with a polynomial decay rate in the case $N < m-1$, similarly to Lemma~2. For example, in case $J(x)=(x-x^*)^m$ and control inputs from Theorem~1 with $1\le N<m-1$, one can obtain the estimate 
 $$
    \|x(t)-x^*\|\le \Big(\lambda_{m}\|x^0-x^*\|^{1+N-m}+\tilde  \lambda_m t\Big)^{-1/(m-N-1)}+\rho
    $$
    with some $\lambda_m,\tilde\lambda_m,\rho>0$,
using argumentation similar to the proof of~\cite[Theorem~3]{GZE18}.
\end{remark}
\subsection{Multi-variable case}
The approach introduced in the previous subsection can be extended to the case $x \in \mathbb{R}^n$. In this subsection, we consider a special case and leave more general conditions for a separate study. In particular, we introduce the following assumptions. 
\begin{assumption}
    The function $J\in C^m(D,\mathbb R^{m})$ \textcolor{black}{satisfies A1.1)}, $D\subset \mathbb R^n$, with some $m\ge 2$, and
there exist constants  $ \alpha_1,\alpha_2,\beta_{1},\beta_{2}$, such that, for all $x\in D$,
     $$ 
     \begin{aligned}
    \alpha_{1}\|x-x^*\|^{m} \le  J(x)-J^* & \le \alpha_{2}\|x-x^* \|^{m},\\
 \sum_{i=1}^n  \frac{\partial^{m-1}J(x)}{\partial x_i^{m-1}} (x_i-x_i^*)&\ge\beta_{1}\|x-x^*\|^2,\\
  \sum_{i=1}^n \left| \frac{\partial^{m-1}J(x)}{\partial x_i^{m-1}}\right|&\le \beta_{2}\|x-x^*\|.
    \end{aligned}
    $$
\end{assumption}
In particular, the above assumption holds if $J=\sum_{i=1}^nJ_i(x_i)$ with $J_i$ satisfying Assumption~1 for all $i=1,2,\dots,n$.

If the Assumption~3 is satisfied, then, analogously  to the previous  subsection, we  design an extremum seeking system
\begin{equation}\label{int_2n}
\dot x_i=g_{1i}(J(x))u_{1i}^\varepsilon(t)+g_{2i}(J(x)) u_{2i}^\varepsilon(t),\, i=1,\dots,n,
\end{equation}
where for each $i$, the pairs of dithers $u_{1i}^\varepsilon(t)$, $u_{2i}^\varepsilon$ are defined as in Theorem~1 to excite the Lie bracket ${\rm ad}_{g_{1i}}^{m-1}g_{2i}$. In order to ensure that no additional Lie brackets of the same or shorter  length are generated, we impose a non-resonance assumption on the control frequencies, similarly to~\cite[Assumption~1]{GZ18},\cite[Assumption~2]{GZ24_CDC}.
\begin{assumption}
The numbers $\kappa_1,\kappa_2,\dots,\kappa_n\in\mathbb N$ satisfy the following property: for any set of numbers $\mu_1,\dots,\mu_m,\nu_1,\dots,\nu_n\in\{0,\pm,1,\dots,\pm N\}$ such that $\sum\limits_{i=1}^n|\mu_i|+|\nu_i|\le N$,
the property $\displaystyle \sum_{i=1}^n(\mu_i+N\nu_i)\kappa_i=0$ implies $\mu_i=-N\nu_i$ for all $i$.
\end{assumption}

\begin{theorem}
    \label{thm_2n}
\textit{Given system~\eqref{int_2n}
and a cost function $J$ satisfying Assumption~3, let the vector fields $g_{1i},g_{2i}\in C^m(D;\mathbb R^n)$ pair-wisely satisfy the relation~\eqref{g_choice}
with some $\sigma>0$,
and let  $u_{1i}^\varepsilon(t)=\varepsilon^{-\tfrac{m-1}{m}}c_1v_{1i}^\varepsilon(t)$, $u_{2i}^\varepsilon(t)={\varepsilon^{-\tfrac{m-1}{m}}}c_2v_{2i}^\varepsilon(t)$ with
$$v_{1i}^\varepsilon(t)= \cos\Big(\frac{2\kappa_{i}\pi t}{\varepsilon}\Big), 
v_{2i}^\varepsilon(t)= \sin\left(\frac{2(m-1)\kappa_{i}\pi t}{\varepsilon}\right),\\
$$
if $m-1$ is odd, and 
$$v_{1i}^\varepsilon(t)=\cos\Big(\frac{2\kappa_{i}\pi t}{\varepsilon}\Big), 
v_{2i}^\varepsilon(t)=\cos\left(\frac{2(m-1)\kappa_{i}\pi t}{\varepsilon}\right),\\
$$
    if $i$ is even. 
Here $c_1$, $c_2$ satisfying~\eqref{cn1cn2} with $N=m-1$, and $\kappa_{i}\in\mathbb N$ satisfy the non-resonance Assumption~4 with $N=m-1$.
  Then the point $x^*$ is semi-globally practically exponentially stable for system~\eqref{int_2}.
}
\end{theorem}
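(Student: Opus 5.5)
The proof will follow the one of Theorem~\ref{thm_2inp}: establish a one-period Chen--Fliess expansion of the form~\eqref{series_2eps} for the multi-variable system~\eqref{int_2n}, then iterate a discrete contraction estimate over intervals of length $\varepsilon$. The main work is the first step, computing $x(\varepsilon)$. Write~\eqref{int_2n} as a $2n$-input system $\dot x=\sum_{i=1}^n\big(G_{1i}u_{1i}^\varepsilon+G_{2i}u_{2i}^\varepsilon\big)$, where $G_{ki}=g_{ki}(J(x))e_i$ and $e_i$ is the $i$-th unit vector. Apply~\eqref{series_gen} with truncation order $N=m-1$.

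All inputs carry the factor $\varepsilon^{-(m-1)/m}$, and the time is rescaled as $s=\varepsilon\tau$. Hence the iterated integral of length $\ell$ contributes a factor $\varepsilon^{\ell}\varepsilon^{-\ell(m-1)/m}=\varepsilon^{\ell/m}$ times a dimensionless integral over $[0,1]$ of products of $\cos(2\pi\kappa_i\tau)$, $\cos(2\pi(m-1)\kappa_i\tau)$ and $\sin(2\pi(m-1)\kappa_i\tau)$. Each such product expands into a sum of exponentials with frequencies $\sum_i(\mu_i+(m-1)\nu_i)\kappa_i$, where $\mu_i$ counts the signed appearances of $v_{1i}$ and $\nu_i$ those of $v_{2i}$. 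The total count satisfies $\sum_i(|\mu_i|+|\nu_i|)\le\ell\le m$.

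\textbf{Key step (non-resonance).} Assumption~4 with $N=m-1$ will show that a nonzero-mean (secular) term can only occur when $\mu_i=-(m-1)\nu_i$ for every $i$. Given the length bound, this forces a single index $i$ with $|\nu_i|=1$ and $|\mu_i|=m-1$, so $\ell=m$. All iterated integrals of length $<m$, and all length-$m$ integrals that mix different indices $i$, then have vanishing contribution at $t=\varepsilon$.

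I expect this to be the main obstacle. The iterated (nested) integrals are not simply averages of products: the inner integrals produce polynomial-in-$\tau$ and non-oscillatory terms. So I must check, as in~\cite{GZ18,GZ24_CDC}, that a non-resonant frequency combination still yields a zero value at $\tau=1$. The plan is an induction on the nesting depth. At each stage, the integral of $e^{2\pi\mathrm{i} k\tau}$ over $[0,s]$ with $k\neq 0$ is periodic with zero value at integer $s$ plus a constant. The constants only generate shorter frequency combinations, and those are still covered by Assumption~4. For the surviving same-$i$ brackets, the computation is exactly that of Theorem~1, which gives the term $\varepsilon\,{\rm ad}_{G_{1i}}^{m-1}G_{2i}$.

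Since $G_{1i},G_{2i}$ act only along $e_i$, we get ${\rm ad}_{G_{1i}}^{m-1}G_{2i}=-\sigma\,\partial^{m-1}J/\partial x_i^{m-1}\,e_i$ by~\eqref{g_choice} applied coordinatewise (the chain rule along $e_i$ reduces to the scalar case). Hence
$$x(\varepsilon)=x^0-\varepsilon\sigma\,\Big(\frac{\partial^{m-1}J}{\partial x_1^{m-1}},\dots,\frac{\partial^{m-1}J}{\partial x_n^{m-1}}\Big)^{\!\top}(x^0)+R(\varepsilon),$$
with $\|R(\varepsilon)\|\le c_R\varepsilon^{1+1/m}$ as in the proof of Theorem~\ref{thm_2inp}. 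This bound follows from the length-$(m+1)$ remainder and the boundedness of the Lie derivatives on a compact $D'\supset\overline{B_\delta(x^*)}$. The bound $\|x(t)-x^0\|\le c_x\varepsilon^{1/m}$ on $[0,\varepsilon]$ keeps solutions in $D'$.

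Finally, expand $\|x(\varepsilon)-x^*\|^2$ and use the second inequality of Assumption~3 for the cross term and the third for the quadratic term. This gives
$$\|x(\varepsilon)-x^*\|^2\le\|x^0-x^*\|^2(1-2\varepsilon\sigma\beta_1)+\varepsilon^{1+1/m}\big(c_{x1}\|x^0-x^*\|+c_{x2}\big),$$
which is the same inequality as in Theorem~\ref{thm_2inp}. The two-case argument (inside or outside $B_{\hat\rho}(x^*)$), iterated over the intervals $[j\varepsilon,(j+1)\varepsilon]$ and combined with the interpolation between grid points, then yields $\|x(t)-x^*\|\le\lambda\|x^0-x^*\|e^{-\gamma t}+\rho$ verbatim. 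This establishes semi-global practical exponential stability.
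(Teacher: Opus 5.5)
Your overall route is the paper's. Its proof of Theorem~3 just asserts the one-period expansion $x(\varepsilon)=x^0-\varepsilon\sigma\sum_i\partial^{m-1}_{x_i}J(x^0)e_i+R(\varepsilon)$ and reruns the proof of Theorem~2 with Assumption~3, which is what your last two paragraphs do. The brackets of fields along $e_i$, the $\varepsilon^{\ell/m}$ scaling and the $\varepsilon^{1+1/m}$ remainder are all fine.

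The gap is in the justification you supply for that expansion, the step you call the main work. The trivial combination $\mu_i=\nu_i=0$ for all $i$ also satisfies $\mu_i=-(m-1)\nu_i$. So ``this forces a single index $i$ with $|\nu_i|=1$, $|\mu_i|=m-1$'' does not follow.

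Balanced combinations are not excluded by Assumption~4, and they cannot be avoided. They occur in every word in which each letter appears an even number of times, e.g.\ $(v_{1i},v_{1i})$, $(v_{1i},v_{1j},v_{1i},v_{1j})$, or, for even $m$, single-index words with two letters $v_{2i}$. They also occur as balanced prefixes inside longer words. In your nested-integral induction they are exactly the case you omit: an accumulated frequency $k=0$, where $\int_0^s1\,d\tau=s$ creates a secular term.

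These terms do not vanish individually. For nonzero integers $a,b$ with $a+b\neq0$, $\int_0^1e^{2\pi\mathrm{i}as_1}\int_0^{s_1}e^{2\pi\mathrm{i}bs_2}\int_0^{s_2}e^{-2\pi\mathrm{i}bs_3}\,ds_3\,ds_2\,ds_1=\frac{1}{4\pi^2ab}$, although the total frequency is $a\neq0$. For the actual dithers they cancel only because of the phase choice of Theorem~1: the $v_{1i}$ are cosines, and the $v_{2i}$ are sines or cosines according to the parity of $m-1$. Your argument never uses this.

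A way to close the gap is to induct on the length $\ell$ of a word $w=(w_1,\dots,w_\ell)$, outermost letter first, with $W_w(s)=\int_0^sw_1W_{(w_2,\dots,w_\ell)}\,dr$ and $W_\emptyset\equiv1$ in rescaled time.
\begin{itemize}
\item Periodicity: if all shorter words have $W(1)=0$, every inner $W$ is a $1$-periodic trigonometric polynomial, so no secular terms appear.
\item Parity: cosines are even, sines are odd, and $\int_0^s$ flips parity, so $W_w(-s)=(-1)^{\ell+n_s(w)}W_w(s)$, where $n_s(w)$ is the number of sine letters. Hence $W_w(1)=\int_{-1/2}^{1/2}w_1W_{(w_2,\dots,w_\ell)}=0$ whenever $\ell+n_s(w)$ is even.
\item Fourier step: otherwise, a Fourier expansion writes $W_w(1)$ as a sum over prefixes $(w_1,\dots,w_q)$ with vanishing signed frequency sum, each weighted by the zero mode of the tail $W_{(w_{q+1},\dots,w_\ell)}$. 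A balanced prefix has even length and an even number of sine letters, so the tail is odd and its zero mode is $0$. A non-balanced zero prefix is excluded by Assumption~4 unless $q=\ell=m$ and $w$ is one of the single-index words of Theorem~1.
\end{itemize}
This gives the vanishing of all words of length $<m$ and of all length-$m$ words other than those of Theorem~1.

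This repair, like your argument, needs Assumption~4 with the bound $\sum_i|\mu_i|+|\nu_i|\le m$, not $\le m-1$ as printed. The printed version admits, e.g., $m=4$, $\kappa=(1,5)$. There the cross resonance $3\kappa_1+2\kappa_1-\kappa_2=0$ gives a nonzero one-period coefficient ($=1/(3840\pi^3)$) for the word $(v_{12},v_{11},v_{11},v_{21})$.
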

\begin{proof}
    The proof is the same as for Theorem~2 taking into account that the expansion~\eqref{series_gen} takes the form
    $$
    \begin{aligned}
  x(\varepsilon)=&x^0+\varepsilon \sum_{i=1}^n {\rm ad}_{g_{1i}}^Ng_{2i}(J(x^0))e_i+R(\varepsilon)\\
  =&x^0-\varepsilon \sigma \sum_{i=1}^n  \frac{\partial^{m-1}J(x^0)}{\partial x_i^{m-1}}  e_i+R(\varepsilon)
    \end{aligned}
    $$
with $e_i$ denoting the $n$-dimensional unit vector with non-zero $i$-th entry.  Then Assumption~3 yields the exponential decay rate estimate. 
\end{proof}

\begin{figure*}[ht]
    \centering
        \centering        \includegraphics[width=0.9\linewidth]{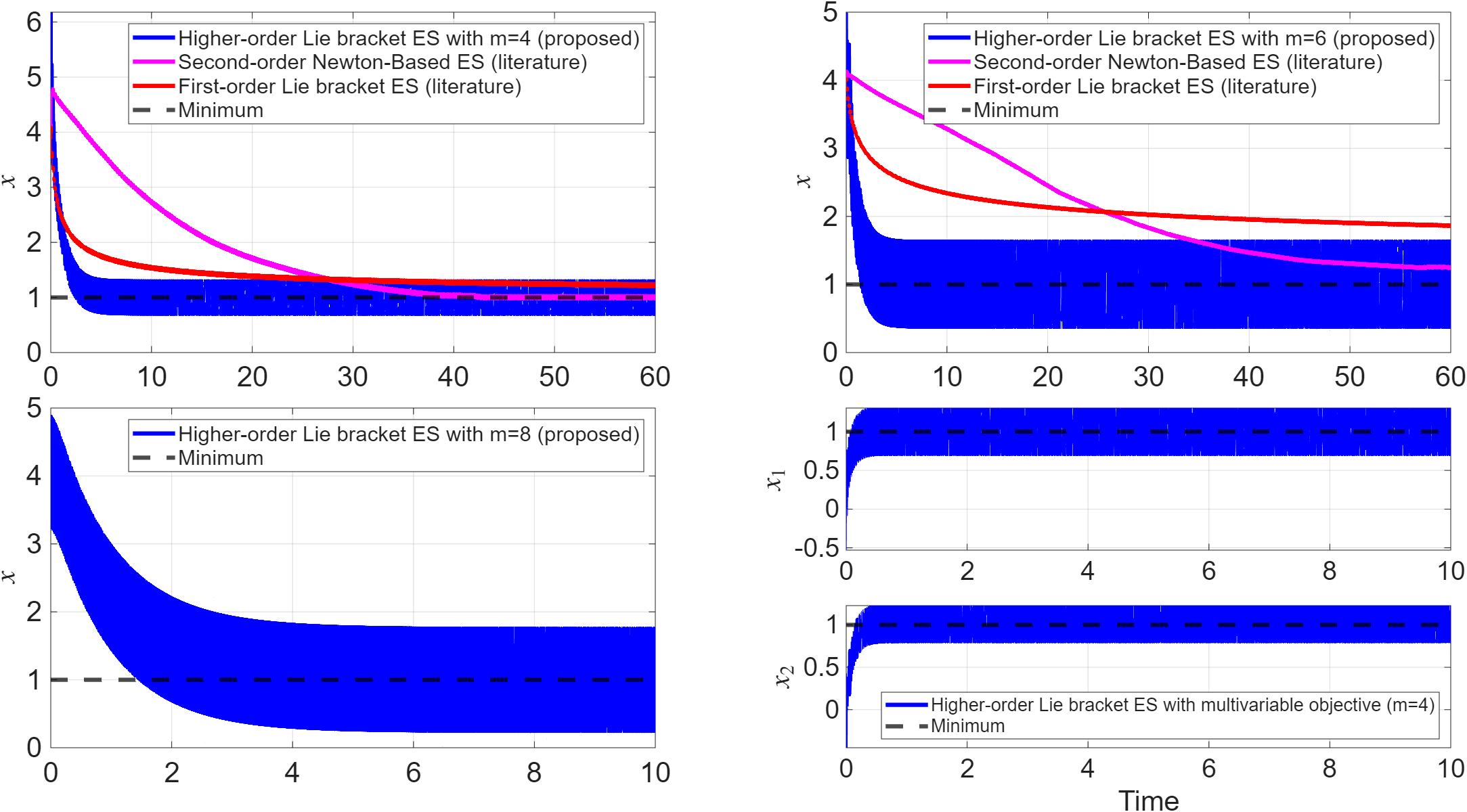}
        \caption{\textcolor{black}{
        Time plots of the proposed ES minimizing $J=\frac{1}{m!}x^m$ for $m=4$ with $\varepsilon=10^{-3}$ (top-left) and $m=6$ with $\varepsilon=10^{-4}$ (top-right) along with comparisons with gradient-based and Newton-based ES methods from literature, demonstrating the exponential convergence advantage of the proposed ES. For $m=8$ with $\varepsilon=10^{-6}$, time plot (bottom-left) shows exponential convergence of the proposed ES. Additionally, with $m=4$ the multi-variable objective function $J(x)=(x_1-1)^4+(x_1-x_2)^4$ is minimized using the proposed muti-variable ES with $\varepsilon=10^{-3}$; $x_1(t)$ and $x_2(t)$ both converges exponentially to the minimum.}
}
    \label{fig_pol4}
\end{figure*}
\section{Numerical simulations}
\subsection{Single-variable cost function}\label{sec:1D_Numerical}
 To demonstrate the effectiveness of the proposed approach, 
we take a cost function as the $m$-th order polynomial:
$$J_m(x)=\frac{1}{m!}(x-1)^m,
$$
and $g_1(z)\equiv 1$, $g_2(z)=-z$, 
so that $ {\rm ad}_{g_1}^{m-1}g_2(J(x))=-\sigma J^{(m-1)}(x)$.
We  apply first-order-based approach~\cite{DurrAuto},
\begin{equation}
    \label{ex1_Durr}
\dot x=2\sqrt{\frac\pi\varepsilon}\left(J_m(x)\cos\frac{2\pi t}{\varepsilon}+\sin\frac{2\pi t}{\varepsilon}\right),
\end{equation}
and higher-order-based approach with two inputs defined as in Theorem~1:
 \begin{equation}
    \label{ex1_m}
\dot x= C_m\left(\cos\frac{2\pi t}{\varepsilon}+(-1)^{\frac{m}{2}}J_m(x)\sin\frac{2(m-1)\pi t}{\varepsilon}\right),
\end{equation}
where $C_m= \left((m-1)!{\Big(\dfrac{4\pi}{\varepsilon}\Big)^{m-1}}\right)^{1/m}$. \textcolor{black}{Additionally, we include comparisons with the Newton-based ES approach from \cite{Labar19} in similar fashion and details to \cite[Examples 2 and 3]{PE23} for the cases of $m=4$ and $m=6$; we could not include a comparison with $m=8$ as numerical solvers struggle severely with the Newton-based approach given the extreme small values of the Hessian near the minimum.}

\textcolor{black}{In all  cases, we  initiate the  equations at \textcolor{black}{$x(0)=4$}. We use $\varepsilon=10^{-3}$ for $m=4$, $\varepsilon=10^{-4}$ for $m=6$, and $\varepsilon=10^{-6}$ for $m=8$.} 
\textcolor{black}{Figure \ref{fig_pol4} shows successful exponential convergence of the proposed ES for $m=4$ (top-left), $m=6$ (top-right) and $m=8$ (bottom-left). Moreover, for $m=4$ and $m=6$ it is clear that the proposed ES outperformed the gradient-based and Newton-based methods substantially. The proposed ES achieved successful convergence within few seconds for $m=4,6,8$ as opposed to gradient-based and Newton-based methods unsuccessful convergence for $m=6$ after 60 seconds; Newton-based method took over 40 seconds to converge in the case of $m=4$ while gradient-based method did not converge even after 60 seconds.} 
\subsection{Multi-variable cost function}
In this subsection, we illustrate Theorem~3 with $x\in\mathbb R^2$, 
$$
J(x)=(x_1-1)^4+(x_1-x_2)^4.
$$
It can be seen that the assumptions of Theorem~3 are satisfied, in particular, $x^*=(1,1)^\top$,
$$ 
     \begin{aligned}
    \alpha_{1}\|x-x^*\|^{4} \le  J(x)-J^* & \le \alpha_{2}\|x-x^* \|^{4},\\
 \frac{\partial^{3}J(x)}{\partial x_1^{3}} (x_1-x_1^*)+\frac{\partial^{3}J(x)}{\partial x_2^{3}} (x_2-x_2^*)&=\\
24(x_1-x_1^*)^2+24(x_1-x_2)^2 &\ge \beta_{1}\|x-x^*\|^2,\\
\left| \frac{\partial^{3}J(x)}{\partial x_1^{3}}\right| +\left|\frac{\partial^{3}J(x)}{\partial x_2^{3}}\right| &=\\
 24(|x_1-x_1^*|+2|x_1-x_2|) &\le \beta_{2}\|x-x^*\|.
    \end{aligned}
    $$
with $\alpha_1=\frac{1}{18}$, $\alpha_2=5$, $\beta_1=8$, $\beta_2=24\sqrt{13}$. According to the approach of Section~III.B, we define the extremum seeking system as
\begin{equation}
    \label{ex_2D_we}
    \begin{aligned}
&\dot x_i=C_{i}\left(\cos\frac{2\pi\kappa_i t}{\varepsilon}+J(x)\sin\frac{6\pi\kappa_i t}{\varepsilon}\right), 
    \end{aligned}
\end{equation}
where $i=1,2$, $C_i=6^{1/4} \Big(\dfrac{4\pi\kappa_i}{\varepsilon}\Big)^{3/4}$, and $\kappa_1,\kappa_2$ satisfy Assumption~4. 
For numerical simulations, we put $\kappa_1=1$, $\kappa_2=4$, $\varepsilon=10^{-3}$, $x(0)=(0,0)^\top$. \textcolor{black}{Figure \ref{fig_pol4} (bottom-right)} shows the time plots of \textcolor{black}{$x_1(t)$ and $x_2(t)$, demonstrating exponential convergence to the the minimum}. 
\subsection{Limitations in Testing Semi-Global Convergence}\label{sec:limitation}
\textcolor{black}{While the theoretical results of the proposed ES approach can attain semi-global convergence for functions that satisfy Assumption 2 semi-globally (i.e., $D$ can be made as large as needed) such as $J_m(x)$ in Section \ref{sec:1D_Numerical}, numerical simulations of such functions can be very challenging for initial conditions further away from the minimum due to the extremely large values of $J_m(x)$. Figure \ref{fig:limitation} includes a simulation for $J_m(x)$ for $m=4$, $\varepsilon=10^{-4}$ and $x(0)=\pm 5$, resulting in $\dot{x}(0)\approx 10^4$, which is extremely large. This motivates the need for bounded update ES laws similar to \cite{Sch14} but with the proposed ES exponential convergence property -- see similar observations from practice in \cite[Sections 4.7-4.9]{palanikumar2026model}.}
\begin{figure}[ht]
    \centering    \includegraphics[width=0.9\linewidth]{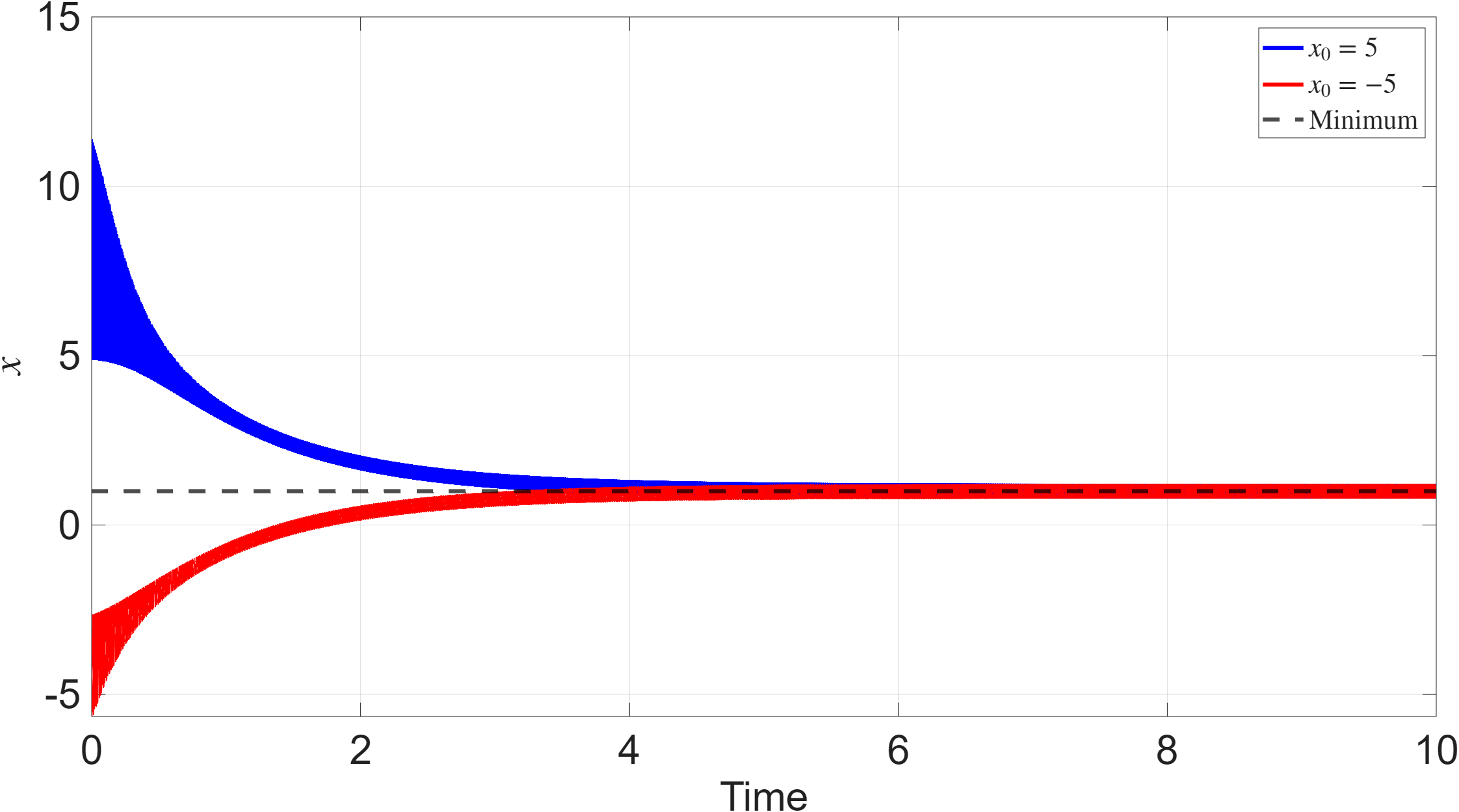}
    \caption{Simulation of $J_m$ in Section \ref{sec:1D_Numerical} for $m=4$, $\varepsilon=10^{-4}$ and $x(0)=\pm 5$, resulting in $\dot{x}(0)\approx 10^4$, which is extremely large and pose some challenges in numerical simulations/implementations as mentioned in Section \ref{sec:limitation}.}
    \label{fig:limitation}
\end{figure}

\section{Conclusions \& Future Work}
In this paper, we have introduced a novel framework for \textcolor{black}{optimization and control} via extremum seeking that utilizes higher-order Lie bracket approximations to achieve improved convergence properties. Unlike many ES approaches that rely on first-order Lie bracket systems and yield exponential convergence only for quadratic-like cost functions, the proposed approach may ensure the exponential convergence even when the cost function is (\textcolor{black}{``flat-bottomed"}), i.e., behaves locally like $\|x - x^*\|^m$ with $m > 2$. This is achieved by integrating ideas from differential geometric control theory related to higher-order Lie bracket averaging for control-affine systems~\cite{PE23} with control design techniques for high-order nonholonomic systems~\cite{GZ24_CDC,Gau14}, and advanced analytical tools for studying dynamical systems, in particular, the Chen--Fliess series expansion \cite{GZE18,GZ23}.

 In future work, we plan to extend this approach to multi-variable cost functions under less restrictive assumptions on their local behavior, as well as to \textcolor{black}{extremum seeking with general dynamic systems} as in \cite{GE21}. We also intend to explore potential applications and implementations of the proposed results in related control problems. In particular, in our recent work~\cite{palanikumar2026model}, we applied this approach to a unicycle-type system and validated it experimentally, \textcolor{black}{which was achieved in a discrete setting; hence, a promising future direction will be expanding the proposed ES approach to discrete framework.} Furthermore, the explicit control functions proposed in Theorem~1 can be used in control and motion planning of nonholonomic systems. Another important research direction concerns possible choices of generating vector fields in Lemma~3. 
 We expect that, as in~\cite{GZE18}, it is possible to address the oscillation problem by appropriately choosing vector fields that vanish at the extremum. 

All in all, we believe that this paper opens new possibilities for designing extremum seeking optimization and control laws and  initiates a promising direction for further developments of high-order Lie bracket methods in optimization tasks. 



\section*{References}
\bibliographystyle{ieeetr}
\bibliography{biblio_ES}
\end{document}